\def\rit{\mathbb{R}}
\def\zit{\mathbb{Z}}   
\def\nit{\mathbb{N}}
\def\ppit{\mathbb{P}} 
\def\qit{\mathbb{Q}} 
\def\cit{\mathbb{C}}
\newcommand{\pf}{{\em Proof.~}}
\newcommand{\qed}{\hfill~~\mbox{$\Box$}}
\newenvironment{proof}{\smallskip \noindent \pf}{\qed \bigskip}
\newtheorem{theorem}{Theorem}[section]
\newtheorem{proposition}[theorem]{Proposition}
\newtheorem{definition}[theorem]{Definition}
\newtheorem{lemma}[theorem]{Lemma}
\newtheorem{corollary}[theorem]{Corollary}
\newtheorem{remark}[theorem]{Remark}
\newtheorem{example}[theorem]{Example}
\newtheorem{conjecture}[theorem]{Conjecture}
\DeclareMathOperator{\card}{Card}
\DeclareMathOperator{\vol}{vol}
\DeclareMathOperator{\gr}{gr}
\DeclareMathOperator{\Spec}{Spec}
\DeclareMathOperator{\Ehr}{Ehr}
\DeclareMathOperator{\Card}{Card}
\DeclareMathOperator{\re}{Re}
\begin{document}

%\thanks{Partially supported by }

\title{\bf From Hodge theory for tame functions to Ehrhart theory for polytopes}

\author{\sc Antoine Douai\thanks{Mathematics Subject Classification 52B20, 32S40, 14J33.
Key words and phrases: toric varieties, hard Lefschetz properties, spectrum of regular functions and polytopes, mirror theorem, orbifold cohomology, distribution of spectral numbers}\\ 
Universit\'e C\^ote d'Azur, CNRS, LJAD, FRANCE\\
Email address: antoine.douai@univ-cotedazur.fr}

\maketitle

\begin{abstract}
We study the interplay between Sabbah's mixed Hodge structure for regular functions and Ehrhart theory for polytopes. To this end, we analyze the properties of the Poincar\'e polynomial of the Hodge filtration of this mixed Hodge structure. We deduce various combinatorial properties of the Hodge numbers attached to a convenient and nondegenerate Laurent polynomial. 
\end{abstract}

\section{Introduction}

The aim of these notes is to study the interplay between the Hodge theory for regular functions developed by Sabbah in \cite{Sab0}, \cite{Sab} (the local model is explained in \cite{ScSt}) and the Ehrhart theory for polytopes. The link between Hodge theory and Ehrhart theory is certainly not new and has been considered in other contexts (see for instance \cite{DK}), but it is fruitful to emphasize it using the mixed Hodge structure basically inspired by 
the mixed Hodge structure on the cohomology of the Milnor fiber of an hypersurface singularity introduced by Steenbrink in \cite{Steen}, and more precisely by its description {\em via}
Varchenko's asymptotic Hodge structure \cite {Va}.

On the one hand, given a tame regular function $f$ on a smooth affine complex variety of dimension $n$, it is constructed in \cite{Sab0}, \cite{Sab}, using Fourier transform techniques, a (limit) mixed Hodge structure 
$MHS_f =(H, F^{\bullet}, W_{\bullet})$ where the (decreasing) Hodge filtration $F^{\bullet}$ and the (increasing) weight filtration $W_{\bullet}$ are filtrations on a finite dimensional complex vector space $H$ satisfying some compatibility conditions: this is {\em Sabbah's mixed Hodge structure}.
The Hodge filtration has a very concrete description, based on a global version of the Brieskorn lattice, and the weight filtration is a monodromy weight filtration (see Section \ref{sec:HodgeRegular} for details).
When $f$ is the mirror partner of a weighted projective space in the sense of \cite{DoMa}, explicit computations of these filtrations are carried out in \cite{DoSa2}. 

We first study the Poincar\'e polynomial of the Hodge filtration. Let us put $\theta_p :=\dim \gr_F^{n-p} H$ for $p\in\zit$. Then $\theta_p =0$ if $p>n$ or $p<0$ and we call the polynomial
\begin{equation}\nonumber
\theta (z) :=\sum_{p=0}^n \theta_i z^i 
\end{equation}
the $\theta$-{\em vector} of the function $f$. 
Thanks to the definition of the Hodge filtration, the $\theta$-vectors can be computed from the spectrum at infinity of $f$ (the definition is recalled in Section \ref{sec:HodgeRegular}) and this allows us to handle significant examples for which this spectrum is already known, in particular convenient and nondegenerate (Laurent) polynomials in the sense of Kouchnirenko \cite{K}, see \cite{DoMa}, \cite{DoSa1}, \cite{DoSa2} (in this situation, the spectrum at infinity is equal to the spectrum of a Newton filtration defined on the Milnor ring of $f$). 
Since $\theta (1)=\dim H\neq 0$, we also have
\begin{equation}\nonumber
\frac{\theta (z)}{(1-z)^{n+1}}=\sum_{m\geq 0} L_{\psi} (m) z^{m} 
\end{equation}
where $L_{\psi}$ is s polynomial in $m$ of degree $n$: we call it the {\em Hodge-Ehrhart polynomial of $f$}. 

In the first part of these notes we highlight the following properties of the $\theta$-vector and of the Hodge-Ehrhart polynomial of a function $f$:
\begin{itemize}
\item we have a decomposition $H=\oplus_{[0,1[} H_{\alpha}$ where $H_{\alpha}$ denotes a generalized eigenspace of the monodromy of $f$ at infinity and we can define similarly the $\theta$-vectors $\theta^{\alpha}(z)$ and the Hodge-Ehrhart polynomials $L_{\psi}^{\alpha}$ for $\alpha\in [0,1[$ by setting $\theta_p^{\alpha}:=\dim \gr_F^{n-p} H_{\alpha}$: 
if we write $\theta^{\neq 0} (z):=\sum_{\alpha \in ]0,1[} \theta^{\alpha} (z)$,
we get
\begin{equation}\nonumber
\theta (z)=\theta^0 (z) +\theta^{\neq 0} (z)
\end{equation}
where $z^n \theta (z^{-1}) =\theta (z)$ and $z^{n+1} \theta^{\neq 0} (z^{-1})=\theta (z)$, the two last equalities being a consequence of the symmetries of the Hodge numbers of the mixed Hodge structure $MHS_f$ (see
Proposition \ref{prop:DecompTheta}). If $H=H_0$ ({\em resp.} $H_0=0$) we will say that $f$ is {\em reflexive} ({\em resp.} {\em anti-reflexive}).
\item  Using again the symmetries of the Hodge numbers of the mixed Hodge structure $MHS_f$, we show the reciprocity law
\begin{equation}\label{eq:EhrReciprocityIntro}
L_{\psi}(-m)=(-1)^n  L_{\psi} (m) +(-1)^n (L_{\psi}^0 (m-1)-L_{\psi}^0 (m))
\end{equation}
for $m\geq 1$ where $L_{\psi}^0$ denotes a contribution of the unipotent part of the monodromy at infinity of $f$ (see Theorem \ref{theo:EhrReciprocity}).
\item We give informations about the coefficients and the roots of 
$L_{\psi}$, regarded as a polynomial over $\cit$. 
In particular, if $f$ is reflexive ({\em resp.} anti-reflexive) we get from (\ref{eq:EhrReciprocityIntro}) the equality 
$L_{\psi}(-m)=(-1)^n L_{\psi} (m-1)$ ({\em resp.} $L_{\psi}(-m)=(-1)^n  L_{\psi} (m)$): the roots of the Hodge-Ehrhart polynomial of a reflexive ({\em resp.} anti-reflexive) function are symmetrically distributed around the 
"critical line" $\re z =-1/2$ ({\em resp.}  $\re z=0$). In Section \ref{sec:CoeffRoots}, 
 we use Rodriguez-Villegas' trick \cite{RV} in order to test if the non integral roots of the Hodge-Ehrhart polynomials of such functions are indeed on these critical lines (this is the content of Theorem \ref{theo:Kron}; note that Hodge-Ehrhart polynomials may have integral roots, see Example \ref{ex:PolynomeHE}).
\item Last, we study the behaviour of the $\theta$-vectors under Thom-Sebastiani sums (Sabbah's mixed Hodge structures behave very well with respect to Thom-Sebastiani type problems): we show that 
if $f'$ is reflexive the  $\theta$-vector $\theta (z)$ of the Thom-Sebastiani sum $f'\oplus f''$ satisfies
\begin{equation}\label{eq:TSIntro}
\theta (z)= \theta' (z)\theta'' (z)
\end{equation}
where $\theta '(z)$ ({\em resp.} $\theta '' (z)$) denotes the $\theta$-vector of $f'$ ({\em resp.} $f''$), see Theorem \ref{theo:ThomSeb}. 
 This is useful in order to construct reflexive and anti-reflexive functions and functions whose Hodge-Ehrhart polynomials have their roots on the critical lines alluded to above. 
\end{itemize}

On the other hand, a function $f$
meets the classical Ehrhart theory {\em via} its Newton polytope. More precisely,
let $f$ be a convenient and nondegenerate Laurent polynomial (in the sense of Kouchnirenko \cite{K}) defined on $(\cit^*)^n$ and assume that its Newton polytope $P$ is simplicial.
 We readily get from \cite{D12} the equality 
\begin{equation}\label{eq:Mirror}
\theta (z)=\delta_{P}(z)
\end{equation}
where $\theta (z)$ is the $\theta$-vector of $f$ and $\delta_{P}(z)$ is the $\delta$-vector of the polytope $P$ (see Section \ref{sec:HodgeEhrhart}; the setting is different, but since we deal with Hodge numbers formula (\ref{eq:Mirror}) should be compared 
with the one obtained in \cite[Section 4]{DK}
for the Hodge numbers of a hypersurface in a torus). 
It is worth noticing here that the $\delta$-vectors are finally closely linked with the spectrum at infinity of $f$, in other words with Bernstein relations in the Fourier transform of the Gauss-Manin system of $f$. It also follows from (\ref{eq:Mirror}) that
the Hodge-Ehrhart polynomial  of the Laurent polynomial $f$ is equal to the Ehrhart polynomial of its Newton polytope $P$.
In this framework, the polynomials $L_{\psi}^{\alpha}$ correspond to the weighted Ehrhart polynomials
defined by Stapledon in \cite{Stapledon} and the reciprocity law (\ref{eq:EhrReciprocityIntro}) corresponds {\em via} equality (\ref{eq:Mirror}) to the classical Ehrhart reciprocity. Note that if (\ref{eq:EhrReciprocityIntro}) is basically Ehrhart reciprocity when $f$ is a Laurent polynomial, its combinatorial meaning  is less clear for other classes of tame functions (for instance polynomial functions).

In this way, we get from Hodge theory and singularity theory other interpretations of some very well-known results in combinatorics: among other examples (discussed in Section \ref{subsec:EhrRecipFunctionPolytope}), the formula given by Braun in \cite{Braun}  for the Ehrhart series of the free sum of two reflexive polytopes can be seen as a particular case of the Thom-Sebastiani formula (\ref{eq:TSIntro}).
Conversely, equality (\ref{eq:Mirror}) has also several consequences for Sabbah's mixed Hodge structures.  
For instance, using Hibi's Lower Bound Theorem \cite{Hibi1}, we get the
following Lower Bound Theorem for the Hodge numbers: if $f$ 
is a convenient and nondegenerate Laurent polynomial defined on $(\cit^* )^n$ 
we have
\begin{equation} \label{eq:LBT}
\dim\gr_F^{n-1}H\leq \dim\gr_F^{n-i}H
\end{equation} 
for $1\leq i\leq n-1$ (see Proposition \ref{prop:LowerBoundTheorem}). Inequality (\ref{eq:LBT}) is helpful in order to understand better the distribution of the spectrum at infinity of a tame regular function $f$: it implies that the number of spectral values contained in $]p-1 ,p]$ is greater or equal than the number of spectral values contained in $]0,1]$ if $p$ is an integer such that $1\leq p\leq n-1$. 

We expect that (\ref{eq:LBT}) is true for all tame functions: this is a part of what we call the $H$-conjecture (see Conjecture \ref{conj:H}). Note that the $\delta$-vector of a simplicial lattice polytope $P$ is a $\theta$-vector if and only if $P$ contains the origin as an interior point (see Proposition \ref{prop:DeltaThetaIFF}): since
Hibi's inequalites $\delta_1 \leq\delta_i$ for $i=1,\ldots n-1$  for the coefficients of the $\delta$-vector hold only for polytopes with interior lattice points, one may suspect that these inequalities have a Hodge theoretic flavor.
Of course, other results of this kind are possible since, by (\ref{eq:Mirror}), any (in)equality involving the $\delta$-vector of a polytope containing the origin as an interior point (this is the convenience assumption) has a counterpart for the Hodge numbers of a Sabbah's mixed Hodge structure. This is discussed in Section
\ref{sec:LinearIn}.

This paper is organized as follows: in Section \ref{sec:HodgeRegular},
we gather the results about Sabbah's mixed Hodge structures that we will use. 
We introduce a Hodge-Ehrhart theory for regular tame functions in Section \ref{sec:DeltaSHM}. We study the coefficients and the roots of Hodge-Ehrhart polynomials in Section \ref{sec:CoeffRoots}. 
Section \ref{sec:ThomSeb} is devoted to the study of Thom-Sebastiani sums
 and we explain in Section \ref{sec:HodgeEhrhart} how the previous results are related to the classical Ehrhart theory for polytopes. In the last section, we study some linear inequalities among the Hodge numbers of a tame function.

\section{Framework: Sabbah's mixed Hodge structures for regular functions}
\label{sec:HodgeRegular}

In this section, we recall Sabbah's setting \cite{Sab0}, \cite{Sab}. 
Let $f$ be a regular function on a smooth affine complex variety $U$ of dimension $n\geq 2$. We will denote by $G$ the localized Laplace transform of its Gauss-Manin system and by $G_0$ its Brieskorn module. By the very definition, 
\begin{equation}\nonumber
G= \Omega^n (U) [\theta , \theta^{-1}]/(\theta d-df\wedge ) \Omega^{n-1} (U) [\theta , \theta^{-1}]
\end{equation}
and $G_0$ is the image of  $\Omega^n (U) [\theta ]$ in $G$.
If $f$ has only isolated singularities, we have
\begin{equation}\nonumber
G_0 = \Omega^n (U) [\theta ]/(\theta d-df\wedge ) \Omega^{n-1} (U) [\theta ]
\end{equation}
and 
\begin{equation}\nonumber
G_0 /\theta G_0 \cong \Omega^{n}(U)/df\wedge \Omega^{n-1} (U),
\end{equation}
the latter being a finite dimensional vector space of dimension $\mu$, the global Milnor number of $f$.
We will say that $f$ is {\em tame} if $f$ is cohomologically tame in the sense of  \cite[Section 8]{Sab}, that is if for some compactification of $f$ no modification of the cohomology of the fibers comes from infinity. 
If $f$ is tame, $G_0$ is a {\em lattice} in $G$ {\em i.e} $G_0$ is free over $\cit [\theta]$ (see \cite[Corollary 10.2]{Sab}) and we have $\cit [\theta , \theta^{-1}]\otimes_{\cit [\theta ]} G_0 =G$. 
 Our favourite class of tame functions is provided by Kouchnirenko's convenient and nondegenerate (Laurent) polynomials defined in \cite{K}: in this situation,
the freeness of $G_0$ is shown by elementary means in \cite[Section 4]{DoSa1}.  

{\em From now on, and otherwise stated, $f$ denotes a tame regular function on a smooth affine complex variety of dimension $n$}.

Let $\tau :=\theta^{-1}$. Then $G$ is a free $\cit [\tau ,\tau^{-1}]$-module, equipped with a derivation $\partial_{\tau}$ (induced from the formula $\partial_{\tau} [\omega ]=[-f\omega]$ if $\omega\in\Omega^{n} (U)$ where the brackets denote the class in $G$).
Let $V_{\bullet}G$ be the Kashiwara-Malgrange filtration of $G$ along $\tau =0$ as in \cite[Section 1]{Sab}. This is an increasing filtration, indexed by $\qit$, which satisfies the following properties:
\begin{itemize}
\item for every $\alpha\in\qit$, $V_{\alpha} G$ is a free $\cit [\tau]$-module of rank $\mu$,
\item $\tau V_{\alpha} G \subset V_{\alpha -1} G$ and $\partial_{\tau} V_{\alpha} G \subset V_{\alpha +1} G$,
\item $\tau\partial_{\tau} +\alpha$ is nilpotent on $\gr_{\alpha}^V G$.
\end{itemize} 
If $f$ is a convenient and nondegenerate (Laurent) polynomial, the filtration $V_{\bullet}G$ can be computed with the help of a Newton filtration (suitably normalized), see \cite[Lemma 4.11]{DoSa1}.
We define, for $\alpha\in\qit$,
\begin{equation}\nonumber
H_{\alpha} := \gr_{\alpha}^V G\ \mbox{and}\ H:=\oplus_{\alpha\in [0,1[} H_{\alpha}.
\end{equation}
We will write $H = H_{\neq 0}\oplus H_0$,
where $H_{\neq 0}:= \oplus_{\alpha \in ]0,1[} H_{\alpha}$.
The $\cit$-vector space $H$ (of dimension $\mu $) is equipped with a nilpotent endomorphism $N$ induced by $-(\tau\partial_{\tau}+\alpha )$ on $H_{\alpha}$: we have $N^n =0$ on $H_{\alpha}$ if $\alpha\in ]0,1[$ and $N^{n+1} =0$ on $H_0$. The vector space $H$ can be identified with the relative cohomology 
$H^n (U , f^{-1} (t) ; \cit )$ for $|t|>>0$: in this setting, $H_{\alpha}$ corresponds to the generalized eigenspace of the monodromy at infinity associated with the eigenvalue $\exp (2i\pi\alpha )$ and the unipotent part of this monodromy is equal to $\exp (2i\pi N)$.

We first construct a Hodge filtration on $H$. Let $G_{\bullet}$ be the increasing filtration of $G$ defined by $G_p :=\theta^{-p} G_0 =\tau^p G_0$ and
\begin{equation}\nonumber
G_p H_{\alpha} := G_p \cap V_{\alpha} G/G_p \cap V_{<\alpha} G
\end{equation}
for $\alpha\in [0,1[$ and $p\in\zit$.

\begin{definition} The Hodge filtration on $H$ is the decreasing filtration $F^{\bullet}$, indexed by $\zit$, defined by 
$F^p H_{\alpha} := G_{n-1-p}H_{\alpha}$ if $\alpha\in ]0,1[$ and $F^p H_{0} := G_{n-p}H_{0}$. 
\end{definition}

 The next lemma makes the link between the Hodge filtration and  the spectrum at infinity $\Spec_f (z)$ of $f$, 
defined by
\begin{equation}\label{eq:spectrum}
\Spec_f (z) :=\sum_{\beta\in\qit} \dim_{\cit} \gr^V_{\beta} (G_0 /\theta G_0 ) z^{\beta}
\end{equation}
where $\gr^V_{\beta} (G_0 /\theta G_0 )= V_{\beta} G_0 /(V_{<\beta} G_0 +\theta G_0 \cap V_{\beta} G_0 )$
and $V_{\beta} G_0 = G_0 \cap V_{\beta} G$.

\begin{lemma}\label{lemma:IsoGrFV}
The multiplication by $\tau^{-1}$ induces isomorphisms
\begin{equation}\nonumber
\tau^{-(n-1-p)}:\gr_F^p H_{\alpha} \stackrel{\cong}{\longrightarrow} \gr^V_{\alpha+n-1-p} (G_0/ \theta G_0 )
\end{equation}
if $\alpha\in ]0,1[$ and 
\begin{equation}\nonumber
\tau^{-(n-p)}:\gr_F^p H_{0} \stackrel{\cong}{\longrightarrow} \gr^V_{n-p} (G_0/ \theta G_0 ). 
\end{equation}
\end{lemma}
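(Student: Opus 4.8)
The plan is to realize the stated map as multiplication by a power of $\tau^{-1}=\theta$, which is an automorphism of $G$, and then to match the two sides by unwinding the definitions. Fix $\alpha\in\,]0,1[$ and $p\in\zit$, put $k=n-1-p$ and $\beta=\alpha+k$. First I would rewrite the source as a single subquotient of $G$: since $V_{<\alpha}G\subset V_{\alpha}G$, the inclusion $G_{k-1}\subset G_{k}$ induces an injection $G_{k-1}H_{\alpha}\hookrightarrow G_{k}H_{\alpha}$, so that
\[
\gr_F^p H_{\alpha}=\frac{G_{k}H_{\alpha}}{G_{k-1}H_{\alpha}}=\frac{G_{k}\cap V_{\alpha}G}{(G_{k-1}\cap V_{\alpha}G)+(G_{k}\cap V_{<\alpha}G)}.
\]

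Next I would push this quotient forward by $\theta^{k}=\tau^{-k}$. Because $G_{j}=\tau^{j}G_{0}$, this automorphism sends $G_{k}$ onto $G_{0}$ and $G_{k-1}$ onto $\theta G_{0}$, and it shifts the $V$-filtration as $\theta^{k}V_{\gamma}G=V_{\gamma+k}G$ (see below). As $\theta^{k}$ is bijective it commutes with intersections, so it carries the quotient above isomorphically onto
\[
\frac{G_{0}\cap V_{\beta}G}{(\theta G_{0}\cap V_{\beta}G)+(G_{0}\cap V_{<\beta}G)}=\frac{V_{\beta}G_{0}}{(\theta G_{0}\cap V_{\beta}G_{0})+V_{<\beta}G_{0}},
\]
the last equality using $\theta G_{0}\subset G_{0}$. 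By the definition of the spectrum this is exactly $\gr^V_{\beta}(G_{0}/\theta G_{0})$, which is the claim. The case $\alpha=0$ runs identically with $k=n-p$ and $\beta=n-p$.

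The step that needs care, and which I expect to be the main obstacle, is the compatibility $\theta^{k}V_{\gamma}G=V_{\gamma+k}G$. The second $V$-property gives the inclusion $V_{\gamma+k}G\subset\theta^{k}V_{\gamma}G$ for free, so the real content is the reverse inclusion, equivalently that $\tau$ induces isomorphisms between successive graded pieces. The third $V$-property ($\tau\partial_{\tau}+\gamma$ nilpotent on $\gr^V_{\gamma}G$) makes $\tau\partial_{\tau}$ invertible on $\gr^V_{\gamma}G$ whenever $\gamma\neq 0$ and $\partial_{\tau}\tau=\tau\partial_{\tau}+1$ invertible whenever $\gamma\neq 1$; combining these on $\gr^V_{\gamma}G$ and $\gr^V_{\gamma-1}G$ shows $\tau\colon\gr^V_{\gamma}G\to\gr^V_{\gamma-1}G$ is an isomorphism for every non-integral $\gamma$. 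For $\alpha\in\,]0,1[$ all intermediate indices are non-integral, so $\theta^{k}V_{\alpha}G=V_{\alpha+k}G$ holds on the nose and every arrow above is a genuine isomorphism. For $\alpha=0$ the indices are integral and $\tau\colon\gr^V_{1}G\to\gr^V_{0}G$ may fail to be an isomorphism; here, however, one only needs the weaker identity $G_{0}\cap\theta^{k}V_{0}G=G_{0}\cap V_{k}G$, and this follows from the injectivity of $\tau$ on $\gr^V_{j}G$ for $j\neq 1$: a class of $G_{0}$ that is not already in $V_{k}G$, pushed down by $\tau^{k}$, only meets the pieces $\gr^V_{j}G$ with $j\ge 2$ and therefore retains a nonzero image in some $\gr^V_{j}G$ with $j\ge 1$, so it cannot land in $V_{0}G$. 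The shifted index $k=n-p$ for $H_{0}$ is exactly what makes this refinement applicable.
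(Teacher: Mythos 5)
Your strategy is the same as the paper's (the paper applies $\tau^{-p}$ and cites \cite[Section 1]{Sab} for the shift isomorphism $\tau^{-p}\colon V_{\alpha}G\cap G_p\stackrel{\sim}{\to}V_{\alpha+p}G\cap G_0$; you try to derive that shift from the three listed properties of $V_{\bullet}G$), and your treatment of $\alpha\in\,]0,1[$ is essentially correct: the identity $\theta^{k}V_{\gamma}G=V_{\gamma+k}G$ is also needed for $\gamma=\alpha-\epsilon$ to handle $V_{<\alpha}$ in the denominator, but since $\alpha-\epsilon>0$ for small $\epsilon$ your injectivity argument covers it. The genuine gap is at $\alpha=0$. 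There the source is $(G_k\cap V_0G)\big/\bigl((G_{k-1}\cap V_0G)+(G_k\cap V_{<0}G)\bigr)$, and after applying $\theta^{k}$ you must match \emph{denominators} as well as numerators: you need
\begin{equation}\nonumber
G_0\cap\theta^{k}V_{<0}G\ \subseteq\ (G_0\cap V_{<k}G)+(\theta G_0\cap V_kG),
\end{equation}
whereas your ``weaker identity'' $G_0\cap\theta^{k}V_0G=G_0\cap V_kG$ only matches the numerators. Your injectivity argument fails for the denominator in exactly the scenario you yourself allow: take $y\in G_0$ whose minimal $V$-index is $\delta=k$ (not $\delta>k$). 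Then pushing $[y]$ down by $\tau^{k}$ traverses $\gr^V_k\to\gr^V_{k-1}\to\cdots\to\gr^V_1\to\gr^V_0$, i.e.\ \emph{through} the possibly non-injective map $\tau\colon\gr^V_1\to\gr^V_0$, so $\tau^{k}y$ can land in $V_{<0}G$ while $[y]\neq0$ in $\gr^V_k$, and nothing in your argument shows that such a $y$ has zero class in $\gr^V_k(G_0/\theta G_0)$. (A minor slip in the same sentence: the jumping indices are rational, so ``$\gr^V_jG$ with $j\geq 2$'' should read $j>1$.)

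The repair is that your worry about $\tau\colon\gr^V_1\to\gr^V_0$ is in fact vacuous here, and this is precisely what the paper's citation of \cite[Section 1]{Sab} encapsulates: the listed properties force $\tau V_{\gamma}G=V_{\gamma-1}G$ for \emph{every} $\gamma$, not just the graded isomorphisms at non-integral indices. Indeed, the shifted filtration $V'_{\gamma}:=\tau V_{\gamma+1}G$ satisfies all three characterizing properties again (each $V'_{\gamma}$ is free of rank $\mu$ over $\cit[\tau]$, the shift conditions are inherited, and $\tau\partial_{\tau}(\tau v)=\tau(\tau\partial_{\tau}+1)v$ shows $\tau\partial_{\tau}+\gamma$ is nilpotent on $\gr^{V'}_{\gamma}$), so uniqueness of the Kashiwara--Malgrange filtration gives $V'_{\bullet}=V_{\bullet}$. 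Equivalently: since $G$ is localized ($\tau$ invertible) and carries $\tau\partial_{\tau}$-stable $\cit[\tau]$-lattices, it is regular at $\tau=0$, and then $\tau\colon\gr^V_{\gamma}\to\gr^V_{\gamma-1}$ is bijective for all $\gamma$, including $\gamma=1$. With this single fact both numerator and denominator match on the nose for all $\alpha\in[0,1[$, and your computation collapses to the paper's one-line proof.
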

\begin{proof} The multiplication by $\tau^{-p}$ induces an isomorphism 
\begin{equation}\nonumber
\tau^{-p}: V_{\alpha}G\cap G_p \stackrel{\cong}{\longrightarrow} V_{\alpha +p} G \cap G_0 
\end{equation}
hence an isomorphism from $\gr^G_p H_{\alpha}$ onto $\gr^V_{\alpha +p} (G_0/ \theta G_0 )$ for $\alpha\in [0,1[$ (see also \cite[Section 1]{Sab}).
\end{proof}

\begin{lemma}\label{lemma:Ffinite}
We have
\begin{enumerate}
\item $F^{n} H_{\alpha}=0$ for $\alpha\in ]0,1[$ and $F^{n+1} H_{0}=0$,
\item $F^0 H_{\alpha}=H_{\alpha}$ for $\alpha\in [0,1[$.
\end{enumerate}
\end{lemma}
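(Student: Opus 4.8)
The plan is to pass from the filtration $F^{\bullet}$ to its graded pieces and then to translate everything, via Lemma \ref{lemma:IsoGrFV}, into a statement about the support of the spectrum at infinity. First I would record that on each $H_{\alpha}$ the filtration $F^{\bullet}$ is finite: it is exhaustive by definition, and since $F^{p}H_{\alpha}=G_{n-1-p}H_{\alpha}$ (resp. $G_{n-p}H_{0}$) while $G_{k}\cap V_{\alpha}G=0$ for $k\ll 0$, it is also separated, with $F^{p}H_{\alpha}=0$ for $p\gg 0$. For such a filtration on a finite-dimensional space one has $\dim F^{p_{0}}H_{\alpha}=\sum_{p\ge p_{0}}\dim\gr_{F}^{p}H_{\alpha}$; hence $F^{p_{0}}H_{\alpha}=0$ is equivalent to the vanishing of $\gr_{F}^{p}H_{\alpha}$ for all $p\ge p_{0}$, and $F^{0}H_{\alpha}=H_{\alpha}$ is equivalent to the vanishing of $\gr_{F}^{p}H_{\alpha}$ for all $p<0$.

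Next I would feed these criteria through Lemma \ref{lemma:IsoGrFV}, which gives $\gr_{F}^{p}H_{\alpha}\cong\gr^{V}_{\alpha+n-1-p}(G_{0}/\theta G_{0})$ for $\alpha\in\,]0,1[$ and $\gr_{F}^{p}H_{0}\cong\gr^{V}_{n-p}(G_{0}/\theta G_{0})$. A bookkeeping of indices then shows that part (1), namely $F^{n}H_{\alpha}=0$ for $\alpha\in\,]0,1[$ and $F^{n+1}H_{0}=0$, is exactly the vanishing of $\gr^{V}_{\beta}(G_{0}/\theta G_{0})$ for all $\beta<0$, whereas part (2), namely $F^{0}H_{\alpha}=H_{\alpha}$ for $\alpha\in[0,1[$, is exactly the vanishing of $\gr^{V}_{\beta}(G_{0}/\theta G_{0})$ for all $\beta>n$. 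Indeed, for $\alpha\in\,]0,1[$ and $p\ge n$ the index $\alpha+n-1-p$ is $\le\alpha-1<0$, and for $p\le -1$ it is $\ge\alpha+n>n$; the case $\alpha=0$ is identical, with $n-p$ in place of $\alpha+n-1-p$. In other words, both assertions are equivalent to the single statement that the spectrum at infinity $\Spec_{f}(z)$ is supported in the interval $[0,n]$.

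The remaining point, which I expect to be the main obstacle, is precisely this support statement, since it carries all the Hodge-theoretic content. I would deduce it from Sabbah's construction \cite{Sab}: the lower bound $\beta\ge 0$ expresses that the Brieskorn lattice $G_{0}$ lies in nonnegative $V$-degrees, while the upper bound $\beta\le n$ reflects that the limit mixed Hodge structure carried by $H\cong H^{n}(U,f^{-1}(t);\cit)$ has Hodge level at most $n$. In fact a single bound suffices: the spectrum at infinity is symmetric about $n/2$, so $\dim\gr^{V}_{\beta}(G_{0}/\theta G_{0})=\dim\gr^{V}_{n-\beta}(G_{0}/\theta G_{0})$, and the two required vanishings are interchanged by $\beta\mapsto n-\beta$; it is then enough to establish, say, the lower bound $\beta\ge 0$, which is the more elementary of the two.
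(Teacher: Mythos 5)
Your proposal is correct and in substance identical to the paper's proof: point 1 is Sabbah's vanishing $G_k H=0$ for $k<0$ (\cite[13.18]{Sab}), which is exactly your lower bound $\beta\geq 0$ on the spectrum once transported through Lemma \ref{lemma:IsoGrFV}, and point 2 is deduced from the Hodge-theoretic symmetry, which the paper invokes as Corollary \ref{coro:IpqNegatifs} and which you use in the equivalent form $\dim\gr^V_{\beta}(G_0/\theta G_0)=\dim\gr^V_{n-\beta}(G_0/\theta G_0)$, i.e.\ Lemma \ref{lemma:SymhAlpha} summed over $q$ and carried over by Lemma \ref{lemma:IsoGrFV} --- and since the proofs of Proposition \ref{prop:SymHodgeNumbers} and Lemma \ref{lemma:SymhAlpha} do not use the present lemma, your appeal to the symmetry is no more circular than the paper's own forward reference. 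Your unification of both parts into the single statement that $\Spec_f(z)$ is supported in $[0,n]$ is a tidy repackaging rather than a genuinely different argument.
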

\begin{proof} 
By \cite[13.18]{Sab}, we have $G_k H_{\alpha}=0$ for $k<0$ whence the first point. 
Since the spectrum at infinity is symmetric about $n/2$ (see {\em loc. cit.}), we deduce that  $\dim_{\cit} \gr^V_{\beta} (G_0 /\theta G_0 )=0$ if $\beta\notin [0,n]$ and we get the second point using Lemma \ref{lemma:IsoGrFV}.
\end{proof}

\begin{remark} \label{rem:theta0}
We have $\dim \gr_F^n H= \dim \gr_F^n H_0 =\dim \gr^V_0 (G_0 / \theta G_0 )$ and the following cases may occur:
\begin{itemize}
\item $\dim\gr_F^n H=1$, this is the case if $f$ is a convenient and nondegenerate Laurent polynomial defined on $(\cit^*)^n$ (see \cite[Lemma 4.3]{DoSa1}),
\item $\dim\gr_F^n H=0$, this is the case if $f$ is a tame polynomial defined on $\cit^n$ (see \cite[Corollary 13.2]{Sab}),
\item $\dim\gr_F^n H>1$, this is the case if $f:U\rightarrow\cit$ is defined by $f(u_0 ,\ldots, u_n )=
u_0 +\ldots +u_n$ where $U=\{ (u_0 ,\ldots ,u_n )\in \cit^{n+1}, \ u_0^{w_0}\ldots u_n^{w_n}=1\}$ and 
$(w_0 ,\ldots ,w_n )\in\nit^{n+1}$ is such that $\gcd (w_0 ,\ldots ,w_n )>1$, see \cite[Section 6.1]{Mann}.
\end{itemize} 
\end{remark}

Let us come to the weight filtration. We have on $H$ a real structure coming from the identification 
\begin{equation}\nonumber
H\stackrel{\cong}{\longrightarrow} H^n (U, f^{-1}(t) ;\cit )=\cit\otimes_{\rit} H^n (U, f^{-1}(t) ;\rit ).
\end{equation}
We will write $H=\cit\otimes H^{\rit}$ and, if $E$ is a subspace of $H$, we will denote by $\overline{E}$ the conjugate of $E$ defined by this complexification.
 Due to the fact that the monodromy at infinity is defined over $\rit$,
we have $\overline{H_{\alpha}}=H_{1-\alpha}$ if $\alpha\in ]0,1[$ and $\overline{H_{0}}=H_{0}$
and therefore the decomposition $H^{\rit}=H_0^{\rit}\oplus H_{\neq 0}^{\rit}$.  
Recall that if $L$ is a nilpotent endomorphism of a vector space $G$ with $L^{r+1}=0$, the {\em weight filtration of $L$ centered at $r$} is the unique increasing filtration $W_{\bullet}$ of $G$ 
\begin{equation}\nonumber
0\subset W_0 \subset W_1 \subset \ldots \subset W_{2r} =G
\end{equation}
such that 
\begin{equation}\label{eq:CarWeightN1}
L(W_i )\subset W_{i-2}
\end{equation}
and
\begin{equation}\label{eq:CarWeightN2}
L^{\ell} :  W_{r+\ell}/ W_{r+\ell -1} \stackrel{\cong}{\longrightarrow}  W_{r-\ell}/ W_{r-\ell -1}.
\end{equation}
We apply this construction to the nilpotent endomorphism $2i\pi N$ of $H^{\rit}$:

\begin{definition}
The weight filtration $W_{\bullet}$ on $H^{\rit}$ is the increasing weight filtration of $2i\pi N$ centered at $n-1$ on $H_{\neq 0}^{\rit}$ and centered at $n$ on $H_0^{\rit}$.
\end{definition}

The Hodge filtration, the weight filtration (transferred on $H$; we will denote by the same letter the weight filtration and its complexification)
and the conjugation are related:
 
\begin{theorem}\cite[Theorem 13.1]{Sab}\label{theo:CanonicalMHSf}
The triple $MHS_f :=(H, F^{\bullet}, W_{\bullet} )$ is a mixed Hodge structure: for $k\in\zit$, the induced filtration $F^{\bullet} \gr^{W}_k H$ defines a Hodge structure of weight $k$ on $\gr^W_k H$.
\end{theorem}

\noindent We will call $MHS_f$ the {\em Sabbah's mixed Hodge structure of} $f$. See \cite{DoSa2} for a concrete description of this mixed Hodge structure in a particular case.

\begin{remark} \label{rem:Ipq}
Following \cite{Del}, let us consider the subspaces
\begin{equation}\label{eq:Ipq}
I^{p,q} := (F^p\cap W_{p+q})\cap (\overline{F^q}\cap W_{p+q} + \sum_{j>0}\overline{F^{q-j}}\cap W_{p+q-j-1} ).
\end{equation} 
By loc. cit.,
we have the decompositions
\begin{equation}\label{eq:DecIpq3}
W_m H=\oplus_{i+j\leq m} I^{i,j},\
F^p H=\oplus_{i\geq p, j} I^{i,j}\ \mbox{and}\
H=\oplus_{p,q} I^{p,q}.
\end{equation}
Since $N (F^{p})\subset F^{p-1}$ and $N (W_q )\subset W_{q-2}$, we get
$N (I^{p,q})\subset I^{p-1,q-1}$.
It follows that $N$ is strict with respect to $F^{\bullet}$ and induces on $\gr_F H$ a graded morphism $[N]$ of degree $-1$: we will use these facts below.
\end{remark}

The {\em  Hodge numbers} of the mixed Hodge structure $MHS_f =(H,F^{\bullet} , W_{\bullet})$
are defined by
\begin{equation}\label{eq:Defhpq}
h^{p,q}:=\dim \gr_F^p \gr^{W}_{p+q}H.
\end{equation}
We will also refer to it as the {\em Hodge numbers of the function} $f$ (if necessary, we will write $h^{p,q}(f)$ instead of $h^{p,q}$). 
Note that we have $\dim \gr_F^p H=\sum_{q} h^{p,q}$ and $\dim \gr^W_m H=\sum_{q} h^{q,m-q}$.
 We will write $h^{p,q}=h^{p,q}_{0}+h^{p, q}_{\neq 0}$, where the subscript refers to the decomposition $H =H_0\oplus H_{\neq 0}$.

\begin{proposition}\label{prop:SymHodgeNumbers}
We have
\begin{enumerate}
\item $h^{p,q}=h^{q,p}$,
\item $h^{p,q}_{\neq 0}=h^{n-1-q, n-1-p}_{\neq 0}$ and $h^{p,q}_{0}=h^{n-q, n-p}_{0}$.
\item $h^{p,q}=0$ if $p,q \notin [0,n]$.
\end{enumerate}
\end{proposition}
\begin{proof} 
By Theorem \ref{theo:CanonicalMHSf}, we have
\begin{equation}\nonumber
\dim \gr_F^p \gr^{W}_{p+q}H=\dim F^p \gr^{W}_{p+q}\cap \overline{F^{q}\gr^W_{p+q}}H=
\dim \overline{F^p \gr^{W}_{p+q}}\cap F^{q}\gr^W_{p+q}H= \dim \gr_F^q \gr^{W}_{p+q}H
\end{equation}
and this shows the first point. 
For the second one, we use the isomorphisms
\begin{equation}\label{eq:IsoN}
[N]^{p+q-(n-1)} : \gr_F^p  \gr^W_{p+q} H_{\neq 0} \stackrel{\cong}{\longrightarrow}  \gr_F^{n-1-q}  \gr^W_{2(n-1)-(p+q)}H_{\neq 0}
\end{equation}
 if $p+q\geq n-1$ which follow from the fact that $N$ is strict with respect to $F^{\bullet}$ (see Remark \ref{rem:Ipq}) and the characteristic property (\ref{eq:CarWeightN2}) of the weight filtration. Same thing for $h^{p,q}_{0}$, keeping in mind that $W_{\bullet}$ is centered at $n$ in this case. The last assertion follows from
Lemma \ref{lemma:Ffinite} and the previous symmetry properties.
\end{proof}

According to the decomposition $H=\oplus_{\alpha \in [0,1[} H_{\alpha}$, 
we define
$$H^{p,q}_{\alpha}:=\gr_F^p \gr^W_{p+q} H_{\alpha},\ 
H^{p,q}:=\oplus_{\alpha\in [0,1[}H^{p,q}_{\alpha}\ \mbox{and}\ h^{p,q}_{\alpha}:=\dim H^{p,q}_{\alpha}$$
(the filtrations are defined from the beginning on each summand).
We will use the following refinement of Proposition \ref{prop:SymHodgeNumbers}:

\begin{lemma}\label{lemma:SymhAlpha}
We have
$h^{p,q}_{\alpha}=h^{n-1-p, n-1-q}_{1-\alpha}$ if $\alpha \in ]0,1[$ and  $h^{p,q}_{0}=h^{n-p, n-q}_{0}$.
\end{lemma}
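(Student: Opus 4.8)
The plan is to reduce the two refined symmetries to the symmetries already proved at the level of $\gr_F \gr^W$ by tracking what the two basic operations---complex conjugation and the nilpotent endomorphism $N$---do to the index $\alpha$. The key observation, already recorded in the excerpt, is that complex conjugation exchanges the generalized eigenspaces according to $\overline{H_{\alpha}}=H_{1-\alpha}$ for $\alpha\in\,]0,1[$ and $\overline{H_0}=H_0$, while $N$ preserves each $H_{\alpha}$ (it is induced by $-(\tau\partial_{\tau}+\alpha)$ on $H_{\alpha}$). So the strategy is to run the proof of Proposition \ref{prop:SymHodgeNumbers} again, but keeping the $\alpha$-decomposition visible throughout rather than summing over $\alpha$.

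First I would treat the case $\alpha\in\,]0,1[$. The point is that the argument for assertion 3 of Proposition \ref{prop:SymHodgeNumbers}, namely $h^{p,q}=h^{q,p}$, is proved using that $F^{\bullet}$ and $\overline{F^{m-\bullet}}$ are opposite on $\gr^W_m H$ (Theorem \ref{theo:CanonicalMHSf}). Because complex conjugation sends $H_{\alpha}$ to $H_{1-\alpha}$, the same opposedness computation gives not $h^{p,q}_{\alpha}=h^{q,p}_{\alpha}$ but rather $h^{p,q}_{\alpha}=h^{q,p}_{1-\alpha}$: conjugating $F^p\gr^W_{p+q}H_{\alpha}$ lands in the $H_{1-\alpha}$ summand. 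Then I would combine this with the isomorphism (\ref{eq:IsoN}) induced by $N$, which preserves $H_{\alpha}$ and (on the part $H_{\neq 0}$, centered at $n-1$) yields $h^{p,q}_{\alpha}=h^{n-1-q,n-1-p}_{\alpha}$. Composing the conjugation symmetry $h^{q,p}_{1-\alpha}=h^{p,q}_{\alpha}$ with the $N$-symmetry applied on the $H_{1-\alpha}$ summand should produce $h^{p,q}_{\alpha}=h^{n-1-p,n-1-q}_{1-\alpha}$, which is the claimed identity.

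For the case $\alpha=0$ the bookkeeping is the same, except that here conjugation fixes $H_0$ and $N$ is centered at $n$ rather than $n-1$. Thus the conjugation step gives $h^{p,q}_0=h^{q,p}_0$ and the strictness-plus-weight-filtration isomorphism (the $H_0$ analogue of (\ref{eq:IsoN}), centered at $n$) gives $h^{p,q}_0=h^{n-q,n-p}_0$; composing them yields the stated $h^{p,q}_0=h^{n-p,n-q}_0$. Concretely I would write down the $H_0$ version of (\ref{eq:IsoN}),
\begin{equation}\nonumber
[N]^{p+q-n} : \gr_F^p \gr^W_{p+q} H_{0} \stackrel{\cong}{\longrightarrow} \gr_F^{n-q} \gr^W_{2n-(p+q)}H_{0}
\end{equation}
valid for $p+q\geq n$, and invoke the corresponding dimension equality.

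The main obstacle I anticipate is purely the direction of the index shift under conjugation: one must be careful that conjugation is antilinear and swaps $F$ with $\overline{F}$ \emph{and} the summand $H_{\alpha}$ with $H_{1-\alpha}$ simultaneously, so that the ``opposite filtrations'' statement of Theorem \ref{theo:CanonicalMHSf} has to be read as a pairing between the $\alpha$ and $1-\alpha$ pieces rather than within a single piece. Getting the composition of the conjugation symmetry and the $N$-symmetry to land on the correct indices $(n-1-p,n-1-q)$ is the only place where a sign or index error could creep in; once the two elementary symmetries are correctly stated per $\alpha$, the result is their composite.
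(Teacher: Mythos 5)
Your proposal is correct and matches the paper's own proof essentially verbatim: both combine the per-$\alpha$ conjugation symmetry $h^{p,q}_{\alpha}=h^{q,p}_{1-\alpha}$ (resp.\ $h^{p,q}_{0}=h^{q,p}_{0}$), coming from $\overline{H_{\alpha}}=H_{1-\alpha}$, with the $\alpha$-preserving isomorphisms induced by $[N]$ (centered at $n-1$ on $H_{\alpha}$, $\alpha\in\,]0,1[$, and at $n$ on $H_0$). Your index bookkeeping in the composition is exactly right, so no gaps remain.
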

\begin{proof}
On the one hand, the isomorphisms
\begin{equation}\nonumber
[N]^{p+q-(n-1)} : \gr_F^p  \gr^W_{p+q} H_{\alpha} \stackrel{\cong}{\longrightarrow}  \gr_F^{n-1-q}  \gr^W_{2(n-1)-(p+q)}H_{\alpha}\ \mbox{for}\ \alpha \in ]0,1[\ \mbox{and}\ p+q\geq n-1
\end{equation}
and 
\begin{equation}\nonumber
[N]^{p+q-n} : \gr_F^p  \gr^W_{p+q} H_{0} \stackrel{\cong}{\longrightarrow}  \gr_F^{n-q}  \gr^W_{2n-(p+q)}H_{0}\ \mbox{for}\ p+q\geq n 
\end{equation}
give
$h^{p,q}_{\alpha}=h^{n-1-q, n-1-p}_{\alpha}$ if $\alpha \in ]0,1[$
and 
$h^{p,q}_{0}=h^{n-q, n-p}_{0}$. On the other hand, $\overline{H^{p,q}_{\alpha}}=H^{q,p}_{1-\alpha}$ if $\alpha\in ]0,1[$ and
$\overline{H^{p,q}_{0}}=H^{q,p}_{0}$ because $\overline{H_{\alpha}}=H_{1-\alpha}$ if $\alpha\in ]0,1[$ and $\overline{H_0}=H_0$.
Thus
$h^{p,q}_{\alpha}=h^{q,p}_{1-\alpha}$ if $\alpha \in ]0,1[$
and
$h^{p,q}_{0}=h^{q,p}_{0}$. The result follows.
\end{proof}

\begin{example}
\label{ex:BasicLaurent}
Let $f$ be the Laurent polynomial defined on $(\cit^*)^5$ by
$$f(u_1 ,\ldots ,u_5 )=u_1+u_2 +u_3 +u_4 +u_5 +\frac{1}{u_1^3 u_2^3 u_3^3 u_4^3 u_5^4}.$$
Then $\mu =17$ and the Hodge diamond of the mixed Hodge structure $MHS_f$ is

$$\begin{array}{ccccccccccc}
 & &   &  & & 1 & &  & &   &\\
 & &   &  & 1 &  & 1  &  &  &  & \\
 & &  & 0 &  & 1 &  & 0 & &  &\\
 &  & 0 &  & 1 &  & 1  &  & 0 &  & \\
 & 0 &  & 1 &  & 2& & 1&  & 0  & \\
0&  & 0  &  & 1  & & 1 & & 0 &   & 0 \\
& 0 &   & 0 & & 1 & & 0 & & 0  &\\
 & &  0 &  & 1 &  & 1  &  & 0 &  & \\
 & &  & 0 &  & 1 &  & 0 & &  &\\
 &  &  &  & 0 &  & 0  &  &  &  & \\
 &  &  &  &  & 1 & & &  &   & \\
\end{array}$$

\noindent where $h^{0,0}$ is at the top (we use the description of Sabbah's mixed Hodge structure given in \cite{DoSa2} in order to compute the Hodge numbers). Note that the function $f$ is the mirror partner of the weighted projective space $\ppit (1, 3,3,3,3, 4)$ in the sense of \cite{DoMa}.

\end{example}

\begin{remark}\label{rem:HodgeTate}
A natural question is to ask when the Hodge numbers satisfy $h^{p,q}=0$ for $p\neq q$. This is a characteristic property of mixed Hodge structure of Hodge Tate type: see \cite{D13}, \cite{Sab1} for a discussion in our singularity context. 
\end{remark}

\section{Hodge-Ehrhart theory for tame regular functions} 
\label{sec:DeltaSHM}

In this section, $f$ denotes a tame regular function on a smooth affine complex variety of dimension $n$ and $MHS_f =(H, F^{\bullet}, W_{\bullet})$ denotes its Sabbah's mixed Hodge structure. 

\subsection{$\theta$-vectors}
\label{sec:EhrSHM}

Let us define, for $p\in\zit$,
$\theta_p^{\alpha}:= \dim \gr_F^{n-p} H_{\alpha}$ if $\alpha\in [0,1[$ 
and $\theta_p :=\sum_{\alpha\in [0,1[}\theta_p^{\alpha}= \dim \gr_F^{n-p} H$. 
It follows from Lemma \ref{lemma:Ffinite} that $\theta_p=0$ for $p\notin [0,n]$.

\begin{definition} The $\theta$-vector of $f$ is the polynomial
$\theta (z) :=\sum_{p=0}^n \theta_p  z^{p}$.
\end{definition}

\noindent For $\alpha\in [0,1[$, we will also consider the polynomials
$\theta^{\alpha} (z) :=\sum_{p=0}^n \theta_p^{\alpha} z^{p}$: by the very definition, we have $\theta (z)=\sum_{\alpha\in [0,1[} \theta^{\alpha} (z)$.

\begin{lemma}\label{lemma:SymDelta}
We have the following symmetry properties:
\begin{enumerate}
\item $\theta_p^{\alpha} = \theta_{n-p+1}^{1-\alpha}$ 
and $\theta^{\alpha} (z)= z^{n+1}\theta^{1-\alpha}(z^{-1})$ if $\alpha\in ]0,1[$,
\item $\theta_p^{0} = \theta_{n-p}^{0}$ and  
$\theta^{0} (z) = z^n \theta^{0}(z^{-1})$.
\end{enumerate}
\end{lemma}
\begin{proof} Let $\alpha\in ]0,1[$.
By Proposition \ref{prop:SymHodgeNumbers} and Lemma \ref{lemma:SymhAlpha}, 
\begin{equation}\nonumber
\theta^{\alpha}_p =\dim\gr_F^{n-p} H_{\alpha}=\sum_q h^{n-p,q}_{\alpha} =\sum_q h^{p-1, n-1-q}_{1-\alpha}= \dim\gr_F^{p-1} H_{1-\alpha}=\theta^{1-\alpha}_{n-p+1}.
\end{equation}
Therefore,
$$\theta^{\alpha} (z)= \sum_{p=1}^n \theta_p^{\alpha} z^{p}=
\sum_{p=1}^n \theta_{n-p+1}^{1-\alpha} z^{p}=\sum_{p=1}^n \theta_{p}^{1-\alpha} z^{n-p+1}
=z^{n+1}\theta^{1-\alpha}(z^{-1}).$$
Analogous computations for $\theta_p^{0}$ and $\theta^{0} (z)$.
\end{proof}

\begin{proposition}\label{prop:DecompTheta}
We have the unique decomposition into polynomials with nonnegative coefficients
\begin{equation}\label{eq:Decomposition} 
\theta (z)=\theta^0 (z)+\theta^{\neq 0} (z)
\end{equation}
where $\theta^{0} (z)= z^{n}\theta^{0}(z^{-1})$
and $\theta^{\neq 0} (z)= z^{n+1}\theta^{\neq 0}(z^{-1})$. 
\end{proposition}
\begin{proof} Let us put $\theta^{\neq 0} (z):=\sum_{\alpha\in ]0,1[} \theta^{\alpha} (z)$: the decomposition (\ref{eq:Decomposition}) follows from the definitions and we get the symmetry properties from Lemma \ref{lemma:SymDelta}. The unicity of the decomposition follows from the required symmetry properties.
\end{proof}

\begin{corollary}\label{coro:SymDeltaH0}
We have $\theta_{n-p}=\theta_p$ for $p=0,\ldots ,n$ if and only if $H=H_0$.
\end{corollary}
\begin{proof}
We have $\theta_{n-p}=\theta_p$ for $p=0,\ldots ,n$ if and only if 
$z^{n}\theta (z^{-1})=\theta (z)$, that is if and only if $\theta^{\neq 0} (z)=0$ by the previous proposition. This happens if and only if $\theta^{\alpha} (z) =0$ for all $\alpha\in ]0,1[$, that is if and only if $H_{\alpha}=0$ for all $\alpha\in ]0,1[$.
\end{proof}

\begin{remark} \label{rem:SpectreSym}
We have $\theta_p^{\alpha}= \dim \gr^V_{\alpha +p-1} (G_0 /\theta G_0 )$ if $\alpha\in ]0,1[$ and $\theta_p^{0}= \dim \gr^V_{p} (G_0 /\theta G_0 )$ (see Lemma \ref{lemma:IsoGrFV}). Therefore, the $\theta$-vector can be computed from the spectrum at infinity of $f$ defined by (\ref{eq:spectrum}). 
\end{remark}

\begin{example}\label{ex:BasicLaurentBis}
Let $f$ be the Laurent polynomial of Example \ref{ex:BasicLaurent}. By \cite{DoSa2} we have 
$$\Spec_f (z)= 1+z+z^2 +z^3 +z^4 +z^5 +z^{7/4}+z^{4/3}+z^{7/3}+z^{10/3}+
z^{13/3}+z^{5/2}+z^{2/3}+z^{5/3}+z^{8/3}+z^{11/3}+z^{13/4}$$
therefore, by Remark \ref{rem:SpectreSym},
$$\theta^{0} (z)=1+z+z^2 +z^3 +z^4 +z^5 ,\ \theta^{\neq 0} (z)=z+3z^2 +3z^3 +3z^4 +z^5 $$ 
and
$$\theta (z)= 1+2z+4z^2+4z^3 +4z^4+2 z^5 .$$ 
See Section \ref{sec:Distrib} for a discussion about the unimodality of the $\theta$-vectors.
\end{example}

\subsection{Hodge-Ehrhart polynomials}
\label{sec:EhrPolSHM}

For $\alpha\in [0,1[$, let $L^{\alpha}_{\psi}$ be the function defined on the non negative integers by the formula
\begin{equation}\nonumber 
\sum_{m\geq 0} L^{\alpha}_{\psi} (m) z^{m} =\frac{\theta_0^{\alpha}+\theta_1^{\alpha} z+\ldots +\theta_n^{\alpha} z^n }{(1-z)^{n+1}}
\end{equation}
where the numbers $\theta_p^{\alpha}$ are defined as in Section \ref{sec:EhrSHM}.

\begin{lemma}\label{lemma:EhrphiPol}
Let $\alpha\in [0,1[$ such that $H_{\alpha}\neq 0$. Then, $L^{\alpha}_{\psi}(m)$ is a polynomial 
in $m$ of degree $n$.
\end{lemma}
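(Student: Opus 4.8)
The plan is to show that $L^{\alpha}_{\psi}(m)$ agrees, for all $m \geq 0$, with a polynomial of degree exactly $n$ in $m$. The standard tool is the following well-known fact about rational generating functions: if $N(z) = \sum_{p} a_p z^p$ is a polynomial with $N(1) \neq 0$ and $\deg N(z) \leq n$, then the coefficients of the power series expansion of $N(z)/(1-z)^{n+1}$ are given by a polynomial in $m$ of degree exactly $n$. Concretely, I would write
\begin{equation}\nonumber
\frac{1}{(1-z)^{n+1}} = \sum_{m\geq 0} \binom{m+n}{n} z^m,
\end{equation}
so that multiplying by the numerator $\theta^{\alpha}(z) = \sum_{p=0}^n \theta^{\alpha}_p z^p$ and extracting the coefficient of $z^m$ yields
\begin{equation}\nonumber
L^{\alpha}_{\psi}(m) = \sum_{p=0}^n \theta^{\alpha}_p \binom{m-p+n}{n}.
\end{equation}
Each summand $\binom{m-p+n}{n}$ is, for fixed $p$, a polynomial in $m$ of degree $n$ whose leading coefficient is $1/n!$, so $L^{\alpha}_{\psi}(m)$ is certainly a polynomial in $m$ of degree at most $n$.

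First I would justify the binomial expansion (the standard Taylor series for $(1-z)^{-(n+1)}$) and confirm that the identity above holds for every $m \geq 0$, not merely asymptotically; since the expansion is exact as a formal power series this is immediate. Then I would compute the degree precisely. The leading term of $L^{\alpha}_{\psi}(m)$ in $m$ is $\frac{1}{n!}\bigl(\sum_{p=0}^n \theta^{\alpha}_p\bigr) m^n = \frac{\theta^{\alpha}(1)}{n!}\, m^n$, so the degree is exactly $n$ provided $\theta^{\alpha}(1) = \sum_{p=0}^n \theta^{\alpha}_p \neq 0$.

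The main obstacle — really the only nontrivial point — is verifying that the numerator does not vanish at $z=1$, i.e. that $\theta^{\alpha}(1) > 0$. This is where the hypothesis $H_{\alpha} \neq 0$ enters. By definition $\theta^{\alpha}(1) = \sum_{p=0}^n \theta^{\alpha}_p = \sum_{p=0}^n \dim \gr_F^{n-p} H_{\alpha} = \dim H_{\alpha}$, since the Hodge filtration $F^{\bullet}$ on $H_{\alpha}$ is exhaustive and separated (by Lemma \ref{lemma:Ffinite}, $F^0 H_{\alpha} = H_{\alpha}$ and $F^{n} H_{\alpha} = 0$ for $\alpha \in ]0,1[$, respectively $F^{n+1}H_0 = 0$), so the graded pieces sum to the whole space. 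Hence $\theta^{\alpha}(1) = \dim H_{\alpha} > 0$ exactly because $H_{\alpha} \neq 0$, and the coefficients $\theta^{\alpha}_p$ are moreover all nonnegative, ruling out any accidental cancellation. Therefore the leading coefficient $\theta^{\alpha}(1)/n! = \dim H_{\alpha}/n!$ is strictly positive and $L^{\alpha}_{\psi}(m)$ has degree exactly $n$.

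I expect the routine computation (the binomial identity and the degree count) to be entirely mechanical; the conceptual content is precisely the translation $\theta^{\alpha}(1) = \dim H_{\alpha}$, which makes the hypothesis $H_{\alpha} \neq 0$ both necessary and sufficient for the degree to equal $n$ rather than drop below $n$.
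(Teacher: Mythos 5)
Your proof is correct and follows essentially the same route as the paper: both expand $1/(1-z)^{n+1}$ binomially to obtain $L^{\alpha}_{\psi}(m)=\sum_{p=0}^{n}\theta_p^{\alpha}\binom{m+n-p}{n}$ and conclude that the leading coefficient is $\bigl(\sum_p \theta_p^{\alpha}\bigr)/n! = \dim H_{\alpha}/n! > 0$. Your explicit justification that $\sum_p \theta_p^{\alpha} = \dim H_{\alpha}$ (via $F^0 H_{\alpha}=H_{\alpha}$ and the vanishing in Lemma \ref{lemma:Ffinite}) and that nonnegativity of the $\theta_p^{\alpha}$ rules out cancellation simply spells out what the paper leaves implicit.
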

\begin{proof}
Expanding the right hand term into a binomial series, we get
\begin{equation}\nonumber
L^{\alpha}_{\psi} (m) =\theta_0^{\alpha}\binom{m+n}{n}+\theta_1^{\alpha}\binom{m+n-1}{n}+\ldots +\theta_{n-1}^{\alpha}\binom{m+1}{n}+\theta_n^{\alpha}\binom{m}{n}.
\end{equation}
Therefore $L^{\alpha}_{\psi} (m)$ is a polynomial of degree $n$ in $m$ if $\dim H_{\alpha}= \theta_0^{\alpha}+\theta_1^{\alpha} +\ldots +\theta_n^{\alpha} \neq 0$.
\end{proof}

\begin{definition} \label{def:HEPol}
The
Hodge-Ehrhart polynomial of $f$
is the polynomial 
$L_{\psi}:=\sum_{\alpha \in [0,1[} L^{\alpha}_{\psi}$.
\end{definition}

It follows from Lemma \ref{lemma:EhrphiPol} that $L^{\alpha}_{\psi}$ and $L_{\psi}$
can be extended (as polynomials) to all 
$\cit$. In particular, it is possible to 
evaluate $L^{\alpha}_{\psi}(t)$ and $L_{\psi}(t)$ for negative integral values of $t$. 
We have the following "reciprocity laws" for the Hodge-Ehrhart polynomials:

\begin{theorem}\label{theo:EhrReciprocity} 
Let $f$ be a tame regular function on a smooth affine complex variety of dimension $n$. Then, for $m\geq 1$, 
\begin{enumerate}
\item $L^{\alpha}_{\psi} (-m)= (-1)^n L^{1-\alpha}_{\psi} (m)$
if $\alpha\in ]0,1[$,
\item $L^{0}_{\psi} (-m)= (-1)^n L^{0}_{\psi} (m-1)$,
\item $L_{\psi}(-m)=(-1)^n  L_{\psi} (m) +(-1)^n (L_{\psi}^0 (m-1)-L_{\psi}^0 (m))$.
\end{enumerate}
\end{theorem}
\begin{proof} 
Let us put $\Ehr_{\psi} (z) :=\sum_{m\geq 0} L_{\psi} (m) z^m$ 
and $\Ehr_{\psi}^{\alpha} (z):=\sum_{m\geq 0} L^{\alpha}_{\psi} (m) z^{m}$ for $\alpha\in [0,1[$.
Let $\alpha\in ]0,1[$: since $\theta_0^{\alpha}=0$ by Lemma \ref{lemma:SymDelta}, we have
\begin{equation}\nonumber
\Ehr_{\psi}^{\alpha} (z) =\frac{\theta_1^{\alpha} z+\ldots +\theta_n^{\alpha} z^n }{(1-z)^{n+1}}
\end{equation}
and we get
$$\Ehr_{\psi}^{\alpha}  (z^{-1})=(-1)^{n+1} \Ehr_{\psi}^{1-\alpha} (z)$$
using the symmetry property $\theta_p^{\alpha} = \theta_{n-p+1}^{1-\alpha}$ given by Lemma \ref{lemma:SymDelta}.
By Popoviciu's theorem (see \cite[Theorem 4.6]{Sta}), we also have 
$$\sum_{m\leq -1} L^{\alpha}_{\psi} (m) z^m = - \sum_{m\geq 0} L^{\alpha}_{\psi} (m) z^m =-\Ehr_{\psi}^{\alpha} (z),$$
 thus $\Ehr_{\psi}^{\alpha} (z^{-1})=-\sum_{m\geq 1} L^{\alpha}_{\psi} (-m) z^m$.
Putting these observations together, we get 
$$\sum_{m\geq 1} L^{\alpha}_{\psi} (-m) z^m = (-1)^n \sum_{m\geq 1} L^{1-\alpha}_{\psi} (m) z^m $$
and the first assertion follows. Analogous computations for $L^{0}_{\psi}$, taking now into account the symmetry property $\theta^0_p =\theta^0_{n-p}$ which shows that $\Ehr_{\psi}^{0}  (z^{-1})=(-1)^{n+1} z \Ehr_{\psi}^{0} (z)$. Last we get, using the two previous results, 
$$L_{\psi}(-m)=\sum_{0\leq \alpha <1} L_{\psi}^{\alpha} (-m)=
(-1)^n \sum_{0<\alpha <1}  L_{\psi}^{\alpha} (m) +(-1)^n L_{\psi}^0 (m-1)$$
$$=
(-1)^n \sum_{0\leq \alpha <1}  L_{\psi}^{\alpha} (m) +(-1)^n (L_{\psi}^0 (m-1)-L_{\psi}^0 (m))=
(-1)^n  L_{\psi} (m) +(-1)^n (L_{\psi}^0 (m-1)-L_{\psi}^0 (m))$$
for $m\geq 1$. This shows the last assertion.
\end{proof}

\begin{example}\label{ex:PolynomeHE}
Let $f$ be the polynomial on $\cit^2$ defined by
$f(u_1 ,u_2 )=u_1^2 +u_2^2 +u_1^2 u_2^2 $.
 Then $\mu =5$, $\Spec_{f} (z)=z^{1/2} +3z +z^{3/2}$ and
$H=H_0 \oplus H_{1/2}$.  We get, using Remark \ref{rem:SpectreSym},
$$\theta^0_0 =0,\  \theta^0_1 =3,\ \theta^0_2 =0,\ \theta^{1/2}_0 =0\ \mbox{and}\ \theta^{1/2}_1 =\theta^{1/2}_2 =1.$$
Therefore, 
$$L_{\psi}^{0} (m) = \frac{3}{2} (m^2 +m),\ L_{\psi}^{1/2} (m) =m^2\ \mbox{and}\ L_{\psi}(m) = \frac{5}{2} m^2 +\frac{3}{2}m.$$
Note that $L^{1/2}_{\psi} (-m)= L^{1/2}_{\psi} (m)$ and
$L^{0}_{\psi} (-m)=L^{0}_{\psi} (m-1)$, and this agrees with Theorem \ref{theo:EhrReciprocity}.
\end{example}

\subsection{Reflexive and anti-reflexive functions}

Theorem \ref{theo:EhrReciprocity} shows that the vector space $H_0$ plays a special role in the study of Hodge-Ehrhart polynomials. This motivates the following definition (see Remark \ref{rem:HegalH0} for a justification of the terminology):

\begin{definition}\label{def:RefARef}
We will say that $f$ is reflexive if $H= H_0$ and that $f$ is anti-reflexive if $H_0 =0$.
\end{definition}

\noindent Recall the spectrum at infinity $\Spec_f (z)$ of the function $f$ defined by (\ref{eq:spectrum}). We will use the following characterization of reflexive functions: 

\begin{proposition} \label{prop:HegalH0pourf}
The following are equivalent:
\begin{enumerate}
\item $\Spec_f (z)$ is a polynomial,
\item $\Spec_f (z) =\theta (z)$,
\item $H=H_0$,
\item $L_{\psi} (m)= L_{\psi}^0 (m)$ for $m\geq 1$, 
\item $L_{\psi}(-m)=(-1)^n L_{\psi} (m-1)$ for $m\geq 1$,
\item $\theta_p =\theta_{n-p}$ for $p=0,\ldots ,n$.
\end{enumerate}
\end{proposition}
\begin{proof} The equivalence between 1 and 3 follows from Lemma \ref{lemma:IsoGrFV}.
The equivalence between 2 and 3 follows from Remark \ref{rem:SpectreSym}.
The equivalence between 3 and 4 follows from the proof of Lemma \ref{lemma:EhrphiPol} and the equivalence between 4 and 5 follows from Theorem \ref{theo:EhrReciprocity}. Last the equivalence between 3 and 6 is precisely Corollary \ref{coro:SymDeltaH0}.
\end{proof}

\noindent Anti-reflexive functions are characterized similarly: in particular, by Proposition \ref{prop:DecompTheta}, $f$ is anti-reflexive if and only if $z^{n+1} \theta (z^{-1})= \theta (z)$.

\section{Coefficients and roots of Hodge-Ehrhart polynomials}
\label{sec:CoeffRoots}

In this section, $f$ denotes a tame regular function on a smooth affine complex variety of dimension $n$. Recall the polynomials 
\begin{equation}\nonumber
L^{\alpha}_{\psi} (X) =\theta_0^{\alpha}\binom{X+n}{n}+\theta_1^{\alpha}\binom{X+n-1}{n}+\ldots +\theta_{n-1}^{\alpha}\binom{X+1}{n}+\theta_n^{\alpha}\binom{X}{n},
\end{equation}
where 
$$\binom{X+i}{n}=\frac{1}{n!} (X+i)(X+i-1)\ldots (X+i-n+1),$$
 defined in Section 
\ref{sec:EhrPolSHM}. 
We will write
\begin{equation}\nonumber
L^{\alpha}_{\psi}(X)= c_n^{\alpha} X^n + c_{n-1}^{\alpha} X^{n-1}+\ldots + c_1^{\alpha} X+ c_0^{\alpha} 
\end{equation}
and
\begin{equation}\nonumber 
L_{\psi}(X)=\sum_{\alpha \in [0,1[} L^{\alpha}_{\psi} (X)= c_n X^n + c_{n-1} X^{n-1}+\ldots + c_1 X+ c_0 .
\end{equation}
We study here the coefficients and roots of these polynomials (as polynomials over $\cit$).
A first consequence of the reciprocity law is that about a half of these coefficients depend only on $H_0$:

\begin{proposition} \label{prop:CC0}
The coefficients of the polynomial $L_{\psi}$ satisfy the following properties:
\begin{enumerate}
\item $n! c_n =\mu$ where $\mu$ is the global Milnor of $f$,
\item  $c_{n-j}=c_{n-j}^0$ if $j$ is odd,
\item $(1-(-1)^j ) c_{n-j}=\sum_{\ell =0}^{j-1} (-1)^{-\ell +j+1} \binom{n-\ell}{n-j} c_{n-\ell}^0$
for $j=1,\ldots ,n$,
\item $c_0 =c_0^0 =\theta_0^0$,
\item $n! c_{n-1}=\frac{n}{2} \dim H_{0}$.
\end{enumerate}
\end{proposition}
\begin{proof}
We have $n! c_n =\theta_0 +\ldots +\theta_n =\dim H=\mu$.
By Theorem \ref{theo:EhrReciprocity}, we have
\begin{equation}\nonumber
L_{\psi}(X)-(-1)^n  L_{\psi} (-X) =L_{\psi}^0 (X)-(-1)^n L_{\psi}^0 (-X)
\end{equation}
and
\begin{equation}\nonumber
L_{\psi}(X)-(-1)^n  L_{\psi} (-X) = L_{\psi}^0 (X)-L_{\psi}^0 (X-1)
\end{equation}
The first equality gives 2 and the second one gives 3.                                                        
We have $c_0 =\theta_0 =\theta_0^0 =c_0^0$ because $\theta_0^{\alpha}=0$ if $\alpha\in ]0,1[$ by Lemma \ref{lemma:SymDelta}, whence 4.
For 5, note that
$$c_{n-1}=\frac{n}{2} c_n^0 = \frac{n}{2} \frac{ (\theta_0^0 +\ldots + \theta_n^0 )}{n!} =\frac{n}{2}\frac{ \dim H_{0}}{n!} $$
where the first equality follows from 3. 
\end{proof}

\begin{corollary}\label{coro:SumProduct}
 Let $z_1 ,\ldots , z_n$ be the roots of $L_{\psi}$. Then,
$$\sum_{i=1}^n z_i =-\frac{n}{2}\frac{\dim H_0}{\mu}\ \mbox{and}\ \prod_{i=1}^n z_i =(-1)^n n!\frac{\theta_0 }{\mu}.$$
\end{corollary}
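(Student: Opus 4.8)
The plan is to apply Vieta's formulas to the polynomial $L_{\psi}(X)=c_n X^n+c_{n-1}X^{n-1}+\ldots+c_0$, reading off the sum and product of the roots directly from the coefficients computed in Theorem \ref{theo:CC0}. Writing $L_{\psi}(X)=c_n\prod_{i=1}^n (X-z_i)$, the elementary symmetric function identities give
\begin{equation}\nonumber
\sum_{i=1}^n z_i = -\frac{c_{n-1}}{c_n}\quad\mbox{and}\quad \prod_{i=1}^n z_i = (-1)^n \frac{c_0}{c_n}.
\end{equation}
So the entire statement reduces to substituting the already-established values of $c_n$, $c_{n-1}$ and $c_0$.

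For the sum of the roots, I would use point 1 of Theorem \ref{theo:CC0}, which gives $n!\,c_n=\mu$, hence $c_n=\mu/n!$, together with point 5, which gives $n!\,c_{n-1}=\tfrac{n}{2}\dim H_0$, hence $c_{n-1}=\tfrac{n}{2}\dim H_0/n!$. Dividing, the $n!$ cancels and we obtain
\begin{equation}\nonumber
-\frac{c_{n-1}}{c_n}=-\frac{\tfrac{n}{2}\dim H_0}{\mu}=-\frac{n}{2}\frac{\dim H_0}{\mu},
\end{equation}
which is the asserted sum. For the product of the roots, I would use point 4, which gives $c_0=c_0^0=\theta_0^0$, and recall from the first point of Proposition \ref{prop:SymDelta} (via the definition $\theta_0=\sum_{\alpha}\theta_0^{\alpha}$ with $\theta_0^{\alpha}=0$ for $\alpha\in ]0,1[$) that $\theta_0=\theta_0^0$, so $c_0=\theta_0$. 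Then
\begin{equation}\nonumber
(-1)^n\frac{c_0}{c_n}=(-1)^n\frac{\theta_0}{\mu/n!}=(-1)^n n!\,\frac{\theta_0}{\mu},
\end{equation}
as claimed.

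This is essentially a bookkeeping corollary: all the substantive work resides in Theorem \ref{theo:CC0}, whose points 1, 4 and 5 already package exactly the three coefficients Vieta's formulas require. The only mild subtlety worth flagging is the identification $c_0=\theta_0$ (rather than merely $c_0=\theta_0^0$), which is what makes the product formula read cleanly in terms of the single scalar $\theta_0$; this follows because the non-integral generalized eigenspaces contribute nothing to the constant term. There is no genuine obstacle here — the main thing to be careful about is simply matching the normalizations $c_n=\mu/n!$ and $c_{n-1}=\tfrac{n}{2}\dim H_0/n!$ so that the factorials cancel correctly in the two ratios.
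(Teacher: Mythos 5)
Your proposal is correct and matches the paper's proof essentially verbatim: the paper likewise reads off $\frac{c_{n-1}}{c_n}=\frac{n}{2}\frac{\dim H_0}{\mu}$ and $\frac{c_0}{c_n}=\frac{n!\,\theta_0}{\mu}$ from Theorem \ref{theo:CC0} and applies Vieta's formulas. Your extra remark that $c_0=\theta_0^0=\theta_0$ (since $\theta_0^{\alpha}=0$ for $\alpha\in\left]0,1\right[$ by Proposition \ref{prop:SymDelta}) is exactly the identification already made in the proof of Theorem \ref{theo:CC0}, point 4.
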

\begin{proof}
Use Proposition \ref{prop:CC0} in order to get the values of $c_{n-1}/c_n$ and  $c_0/c_n$. 
\end{proof}

The next result shows that Hodge-Ehrhart polynomials may have integral roots: we will call these integral roots {\em trivial roots}: 

\begin{lemma}\label{lemma:IntegerRoots}
Assume that 
$\theta (z) =z^{\ell} U(z)$
 where $U(0)\neq 0$ and $\ell\geq 1$.
Then,
$$L_{\psi} (X) =(X+1-\ell )(X+2-\ell )\ldots (X+\ell -1)v(X)$$
where $v(\ell )\neq 0$.  
\end{lemma}
\begin{proof} 
By assumption, $\theta_{0}=\ldots =\theta_{\ell -1}=0$ and $\theta_{\ell }\neq 0$ therefore $L_{\psi} (0)=\ldots =L_{\psi}(\ell -1 )=0$ and $L_{\psi} (\ell )\neq 0$. The same results hold also for 
$L_{\psi}^{0}$ and we get $L_{\psi}(-i)=0$ for $i=1,\ldots , \ell -1$
using Theorem \ref{theo:EhrReciprocity}.
\end{proof}

If $f$ is reflexive ({\em resp.} anti-reflexive) 
we have 
\begin{equation}\label{eq:FunctionalEq}
L_{\psi} (-X)= (-1)^n L_{\psi} (X-1)\ (\mbox{{\em resp.}}\ L_{\psi}(-X)=(-1)^n  L_{\psi} (X))
\end{equation}
 by Proposition \ref{prop:HegalH0pourf}, 
hence the roots of $L_{\psi}$ are symmetrically distributed with respect to the ''critical line'' $\re z =-\frac{1}{2}$ ({\em resp.} $\re z =0$). It is natural to look more precisely for functions whose Hodge-Ehrhart polynomial have their roots on the lines $\re z=-1/2$ or $\re z= 0$.
The following terminology is suggested by \cite{Go}:

\begin{definition}\label{def:CLACL}
We will say 
that $L_{\psi}$ satisfies (CL) (resp. (CL*))
if its roots (resp. its non-trivial roots) are on the line $\re z =-\frac{1}{2}$ and that $L_{\psi}$ satisfies (ACL) (resp. (ACL*)) 
if its roots (resp. non-trivial roots) are on the line $\re z =0$.
\end{definition}

\begin{proposition}\label{prop:CNCL}
Assume that $L_{\psi}$ satisfies (CL) (resp. (ACL)). Then $f$ is reflexive (resp. anti-reflexive).	
\end{proposition}
\begin{proof} If $L_{\psi}$ satisfies (CL), the sum of its roots is equal to $-\frac{n}{2}$ and we 
get $\dim H_0 =\mu$, hence $H_0 =H$, from Corollary \ref{coro:SumProduct}. The remaining assertion is shown similarly.
\end{proof}

In order to precise Lemma \ref{lemma:IntegerRoots} and the converse of Proposition \ref{prop:CNCL}, we will use the
the following result which is due (up to a harmless shift) to Rodriguez-Villegas \cite{RV}:

\begin{lemma}\label{lemma:Kronecker}
Assume that 
$\theta (z) =z^k U(z)$	
where $U \in \zit [X]$ is a polynomial of degree $r>0$ whose roots are on the unit circle. Then,
\begin{equation}\nonumber
L_{\psi} (X) =(X+1-k)(X+2-k)\ldots (X+n-r-k) v(X)
\end{equation}
where $v$ is a polynomial of degree $r$ whose roots are on the line $\re z=-\frac{n+1-r}{2}+k$ 
if $r\leq n-1$ and $L_{\psi}$ is a polynomial of degree $n$ whose roots are on the line $\re z=-\frac{1}{2}$ 
if $r=n$.
\end{lemma}
\begin{proof}
First, notice that $\theta (1)=U(1)\neq 0$. Let us write
$$\frac{U( z)}{(1-z)^{n+1}}=\sum_{m\geq 0} L_{\psi}^1 (m) z^m .$$
Then, by \cite[p. 2251]{RV}, we have
$$L_{\psi}^1 (X) =(X+1)(X+2)\ldots (X+n-r) v^1 (X)$$
where the roots of the polynomial $v^1$ are on the line $\re z =-(n+1-r)/2$ 
(the product of linear factors before $v^1$ is equal to $1$ if $n=r$).
Thus,
$$\frac{z^k U( z)}{(1-z)^{n+1}}=\sum_{m\geq k} L_{\psi} (m) z^m $$
where $L_{\psi} (X)=L_{\psi}^1 (X-k)$ and the result follows.	
\end{proof}

\begin{theorem} 
\label{theo:Kron}
Assume that $\theta (z)= z^k U(z)$ where 
where $U \in \zit [X]$ is a polynomial of degree $r>0$ whose roots are on the unit circle.
\begin{enumerate}
\item If $k=0$, the roots of $L_{\psi}$ are on the line $\re z =-1/2$: $L_{\psi}$ satisfies (CL). 
\item If $k\geq 1$ and $f$ is reflexive we have 
\begin{equation}\nonumber
L_{\psi} (X) =(X+1-k)(X+2-k)\ldots (X+k) v(X)
\end{equation}
where $v$ is a polynomial of degree $n-2k$ whose roots are on the line $\re z=-\frac{1}{2}$: $L_{\psi}$ satisfies (CL*). 
\item If $k\geq 1$ and $f$ is anti-reflexive we have 
\begin{equation}\nonumber
L_{\psi} (X) =(X+1-k)(X+2-k)\ldots (X+k-1) v(X)
\end{equation}
where $v$ is a polynomial of degree $n-2k+1$ whose roots are on the line $\re z =0$: $L_{\psi}$ satisfies (ACL*).	

\end{enumerate}
\end{theorem}
\begin{proof}  
For 1, 
note first that, by assumption, $\theta_0 \neq 0$.
By Hodge symmetry, 
$\theta_n =\theta_n^0 +\theta_n^{\neq 0}= \theta_0^0 +\theta_n^{\neq 0}=\theta_0 +\theta_n^{\neq 0}\geq 1$ thus $\theta$ is a polynomial of degree $n$.
Then, we apply Lemma \ref{lemma:Kronecker} with $r=n$ and $k=0$ in order to get the assertion about the roots of $L_{\psi}$.
Assume that $f$ is reflexive. By Proposition
\ref{prop:HegalH0pourf} we have $\theta (z)=z^n \theta (z^{-1})$, therefore $z^{n-2k} U(z^{-1})=U(z)$. 
Due to the assumption on the roots of $U$, we have also $z^r U(z^{-1})= U(z)$. We conclude that $r=n-2k$ and 
we apply Lemma \ref{lemma:Kronecker} in order to get 2.
 In the same way, if $f$ is anti-reflexive we have 
$z^{n+1} \theta (z^{-1})=\theta (z)$ therefore $r=n-2k+1$, and 3 also follows from Lemma \ref{lemma:Kronecker}.\end{proof}

\begin{example}\label{ex:CLRU} 
The following examples illustrate the three cases of Theorem \ref{theo:Kron}.
\begin{enumerate}
\item Let $f$ be the Laurent polynomial on $(\cit^*)^6$ defined by
$$f(u_1 , u_2 , u_3 ,u _4 , u_5 , u_6 )=u_1 +u_2 +u_3 +u_4 +u_5+u_6 +\frac{1}{u_1 u_2 u_3 u_4 u_5 u_6^3}.$$
 Then $f$ is reflexive (by \cite{DoSa2} we have $\Spec_f (z)=1+z+2z^2+z^3+2z^4 +z^5 +z^6$) and
$\theta (z)=\Spec_f (z) = (1+z+z^2)(1+z^2 +z^4)$. It follows from Theorem \ref{theo:Kron} that $L_{\psi}$ satisfies (CL).
\item Let $n\geq 3$ and let $f$ be the polynomial on $\cit^n$ defined by
$$f(u_1 ,\ldots , u_n )=u_1 +\ldots +u_n +u_1 \ldots u_n.$$
 Then $f$ is reflexive (we have $\Spec_f (z)=z+z^2 +\ldots +z^{n-1}$) and
$\theta (z)=z+z^2 +\ldots +z^{n-1}$. It follows from  Theorem \ref{theo:Kron} that $L_{\psi}(X) =X(X+1)v (X)$
where $v$ is a polynomial of degree $n-2$ whose roots are on the line $\re z=-1/2$: $L_{\psi}$ satisfies (CL*).
\item Let $f$ be the polynomial on $\cit^2$ defined by
$f(u_1 , u_2 )=u_1^2 +u_2^3$.
 Then $f$ is anti-reflexive (since $\Spec_f (z)=z^{5/6}+z^{7/6}$) and $\theta (z)=z+z^2$. Theorem \ref{theo:Kron} asserts that $L_{\psi}(X) =Xv (X)$
where $v$ is a polynomial of degree $1$ whose roots are on the line $\re z=0$: $L_{\psi}$ satisfies (ACL*).
\end{enumerate}

\noindent Of course, the polynomial $L_{\psi}$ can be explicitly computed in the previous examples: for the first one, we check that
$L_{\psi} (X)=\frac{1}{6!}(9 X^6 +27X^5 +405 X^4 +765 X^3 +2106 X^2 +1728X+720)$ etc...
\end{example}

\begin{remark} 
One may wonder how common is the assumption of Theorem \ref{theo:Kron}. For instance, the roots of the $\theta$-vector of a Laurent polynomial whose Newton polytope is a reduced 
(in the sense of \cite{Conrads}) and reflexive three dimensional simplex are on the unit circle if and only if $\mu $ is equal to $4$, $6$ or $8$, that is in $4$ out of $14$ cases, see the list in loc. cit.
Nevertheless, we will see in the next section that their Thom-Sebastiani sums will provide an infinite number of functions satisfying the assumption of Theorem \ref{theo:Kron}.
\end{remark}

\begin{remark} Of course, the assumption of Theorem \ref{theo:Kron} is not necessary in order to get Hodge-Ehrhart polynomials satisfying (CL). For instance, let $f (u_1 ,u_2)=u_1 +u_2 +1/(u_1^2 u_2^3)$: the roots of its $\theta$-vector $\theta (z)= 1+4z+z^2$ are not on the unit circle but its Hodge-Ehrhart polynomial satisfies (CL).
\end{remark}

\begin{remark} Assume that $\theta_0 =1$ and that the roots of $\theta$ are on the unit circle. Then, 
the roots of $\theta$ are roots of unity.
This contains Stanley's description of Hilbert-Poincar\'e series of finitely generated graded algebras which are complete intersections \cite[Corollary 3.4]{Sta}. 
\end{remark}

\section{Thom-Sebastiani and Hodge-Ehrhart polynomials}
\label{sec:ThomSeb}

Let $f' :U\rightarrow \cit$ and $f'' :V\rightarrow\cit$ be two regular functions. We define their {\em Thom-Sebastiani sum}
$$f:= f' \oplus f'' : U\times V\rightarrow\cit$$
by 
$$f' \oplus f'' (u,v) =f' (u)+f'' (v).$$
 By \cite{NS}, $f$ is tame if $f'$ and $f''$ are tame. 
We will denote by $\theta (z)$ ({\em resp.} $\theta' (z)$, $\theta'' (z)$) the $\theta$-vector of $f$ ({\em resp.} $f'$, $f''$) and we will put $n_1 =\dim U$, $n_2 =\dim V$ and $n=n_1 +n_2$.

The following result shows that Thom-Sebastiani sums are useful in order to construct reflexive functions, anti-reflexive functions and functions satisfying the assumption of Theorem \ref{theo:Kron}:

\begin{theorem} 
\label{theo:ThomSeb}
Assume that the function $f'$ is reflexive.
Then
\begin{equation}\label{eq:TS0}
\theta (z)= \theta '(z)\theta '' (z)
\end{equation}
and the decomposition (\ref{eq:Decomposition}) is given by
\begin{equation}\label{eq:TS1}
\theta^0 (z)= \theta '(z)(\theta '')^0 (z)
\end{equation}
and
\begin{equation}\label{eq:TS2}
\theta^{\neq 0} (z)= \theta '(z)(\theta '')^{\neq 0} (z).
\end{equation}
In particular, $f$ is reflexive (resp. anti-reflexive) if and only if $f''$ is reflexive (resp. anti-reflexive).
\end{theorem}
\begin{proof} 
For the first assertion, note that 
$$\theta_i =\sum_{i-1<\beta\leq i}\nu_{\beta}=\sum_{i-1<\beta '+\beta '' \leq i}\nu_{\beta '}\nu_{\beta ''}=\sum_{\beta '} \nu_{\beta'}\sum_{i-1-\beta '<\beta '' \leq i-\beta '}\nu_{\beta ''}=\sum_{\beta '}\theta '_{\beta '}\theta ''_{i-\beta '}$$
where $\nu_{\beta}:=\dim \gr^V_{\beta} (G_0 /\theta G_0 )$ ({\em resp.} $\nu_{\beta '}$, $\nu_{\beta ''}$) denotes the multiplicity of $\beta$ ({\em resp.} $\beta '$, $\beta ''$) in the spectrum at infinity of $f$ ({\em resp.} $f'$, $f''$):
 the first equality follows from Remark \ref{rem:SpectreSym}, the second one from \cite[Proposition 3.7]{Sab} and the last one from the fact that $f'$ is reflexive (by Proposition \ref{prop:HegalH0pourf} $\beta '$ is a nonnegative integer and we have $\nu_{\beta'}=\theta '_{\beta '}$; note that formula (\ref{eq:TS0}) is visibly false without the reflexive assumption on $f'$).
Therefore, 
\begin{equation}\nonumber
\theta (z)= \theta '(z)\theta '' (z)= \theta '(z)(\theta '')^0 (z)+\theta '(z)(\theta '')^{\neq 0} (z)
\end{equation}
where $\theta '' (z)= (\theta '')^0 (z)+(\theta '')^{\neq 0} (z)$ is the decomposition given by
Proposition \ref{prop:DecompTheta}. 
Because $f'$ is reflexive, we get from Proposition \ref{prop:HegalH0pourf} 
\begin{equation}\nonumber
z^n \theta '(z^{-1})(\theta '')^0 (z^{-1})=z^{n_1}\theta '(z^{-1})z^{n_2}(\theta '')^0 (z^{-1})=\theta '(z)(\theta '')^{0} (z)
\end{equation}
and
\begin{equation}\nonumber
z^{n+1} \theta '(z^{-1})(\theta '')^{\neq 0} (z^{-1})=z^{n_1}\theta '(z^{-1})z^{n_2 +1}(\theta '')^{\neq 0} (z^{-1})=\theta '(z)(\theta '')^{\neq 0} (z).
\end{equation}
Thus, (\ref{eq:TS1}) and (\ref{eq:TS2})
follow from the unicity of the decomposition (\ref{eq:Decomposition}).
\end{proof}

\begin{example} \label{ex:ThomSeb}
In the following examples, we use Theorem \ref{theo:ThomSeb} in order to construct functions satisfying the assumption of Theorem \ref{theo:Kron}.  
\begin{enumerate} 
\item Let 
$$f' (u_1 ,\ldots, u_{n_1} )=u_1 +\ldots +u_{n_1} +\frac{1}{u_1 \ldots u_{n_1}}$$
and
$$f'' (v_1 ,\ldots, v_{n_2} )=v_1 +\ldots +v_{n_2} +\frac{1}{v_1 \ldots v_{n_2}},$$
defined respectively on $(\cit^*)^{n_1}$ and 
$(\cit^*)^{n_2}$. 
Then the $\theta$-vector of the Thom-Sebastiani sum $f=f' \oplus f''$ is
$$\theta (z) =\theta' (z)\theta'' (z)=(1+z+\ldots +z^{n_1})(1+z+\ldots +z^{n_2})$$
since, again by \cite{DoSa2} and Proposition \ref{prop:HegalH0pourf},
$\Spec_{f'} (z)=1+z+\ldots +z^{n_1}=\theta' (z)$ 
and 
$\Spec_{f''} (z)=1+z+\ldots +z^{n_2}=\theta'' (z)$ ($f'$ and $f''$ are reflexive). It follows from
Theorem \ref{theo:Kron} that the roots of the Hodge-Ehrhart polynomial of $f$ are on the line $\re z =-1/2$. 
\item

 Let 
$f' (u_1 ,u_2)=u_1 +u_2 +\frac{1}{u_1 u_2^2}$
and
$f'' (v_1 , v_{2} )=v_1^2 +v_{2}^3$,
defined respectively on $(\cit^*)^2$ and 
$\cit^2$. The function $f'$ is reflexive and the function $f''$ is anti-reflexive.
Thus $f=f' \oplus f''$ is anti-reflexive and its $\theta$-vector is
$$\theta (z) =\theta' (z)\theta'' (z)=(1+2z+z^2)(z+z^{2})=z(1+z)^3.$$
It follows from
Theorem \ref{theo:Kron} that the Hodge-Ehrhart polynomial of $f$ is $L_{f} (X)=Xv(X)$ where $v$ is a polynomial of degree $3$ whose roots are on the line $\re z =0$. 
\end{enumerate}
\end{example}

\begin{remark}
 Let $f=f' \oplus f''$ and assume that $f'$ is reflexive. Then, 
\begin{equation}\nonumber 
	h^{p,q} (f)=\sum_{i,j} h^{i,j} (f') h^{p-i, q-j} (f'')
\end{equation}
where $h^{p,q} (g)$ denotes the Hodge numbers of the function $g$. More precisely, we have
\begin{equation}\nonumber
	h^{p,q}_0 (f)=\sum_{i,j} h^{i,j} (f') h^{p-i, q-j}_0 (f'')\ \mbox{and}\ h^{p,q}_{\neq 0} (f)=\sum_{i,j} h^{i,j} (f') h^{p-i, q-j}_{\neq 0} (f'')
\end{equation}
where the subscripts refer to the decomposition $H=H_0 \oplus H_{\neq 0}$ (this is
an adaptation of \cite[Proposition 8.12]{ScSt}).
This could be used in order to construct regular functions with prescribed Hodge numbers. 
\end{remark}

\section{Sabbah's mixed Hodge structures and Ehrhart theory}

\label{sec:HodgeEhrhart}

We show in this section that the Hodge-Ehrhart theory developed in Section \ref{sec:DeltaSHM} is related to the classical Ehrhart theory for polytopes when $f$ is a Laurent polynomial. 

\subsection{Hodge and Ehrhart {\em via} Newton polytopes}
\label{sec:HodgeEhrhartNewtPol}

Recall that if
$P$ is a lattice polytope in $\rit^n$ (in what follows we will consider only full dimensional lattice polytopes), the function 
defined by $L_P (\ell ):= \card ( (\ell P )\cap N)$  for nonnegative integers $\ell$ is a polynomial in $\ell$ of degree $n$ and that we have
\begin{equation}\nonumber 
\Ehr_P (z):=1+\sum_{m\geq 1}L_P (m) z^m =\frac{\delta_0 +\delta_1 z +\ldots +\delta_n z^n}{(1-z)^{n+1}}
\end{equation}
where the $\delta_j$'s are nonnegative integers. The polynomial $L_P $ is called the Ehrhart polynomial of $P$, $\delta_P (z)=\delta_0 +\delta_1 z+ \ldots +\delta_n z^n$ is called the $\delta$-vector of $P$ and
the function $\Ehr_P$ is called the Ehrhart series of $P$ (see for instance \cite{BeckRobbins}).

\begin{theorem}\label{theo:ThetaEgalDelta}
Let $f$ be a convenient and nondegenerate Laurent polynomial on $(\cit^*)^n$ 
(in the sense of Kouchnirenko \cite{K})
and let $P$ be its Newton polytope. Assume that $P$ is simplicial. Then, 
$$\theta (z)=\delta_{P}(z)$$
where $\theta (z)$ is the $\theta$-vector of $f$ and $\delta_P (z)$ is the $\delta$-vector of $P$.
\end{theorem}
\begin{proof} Let $MHS_f =(H, F^{\bullet}, W_{\bullet})$ be the Sabbah's mixed Hodge structure of $f$.
We have to show that
$\dim\gr^p_F H=\delta_{n-p}$ 
for $p=0,\ldots ,n$.
Let $\alpha\in ]0,1[$. 
By \cite[Remark 4.8]{DoSa1} and 
because $f$ is convenient and nondegenerate, the Brieskorn module $G_0$ is a lattice in $G$ 
and
$$\dim\gr^p_F H_{\alpha}=\dim\gr^{V}_{\alpha +n-1-p} G_0 /\theta G_0  = \dim\gr^{\mathcal{N}}_{\alpha +n-1-p} G_0 /\theta G_0$$
where the first equality follows from Lemma \ref{lemma:IsoGrFV} and the second one follows from the identification between the $V$-filtration and the Newton filtration $\mathcal{N}$ given by \cite[Theorem 4.5]{DoSa1}. In the same way, 
$$\dim\gr^p_F H_{0}=\dim\gr^{V}_{n-p} G_0 /\theta G_0  = \dim\gr^{\mathcal{N}}_{n-p} G_0 /\theta G_0.$$
Because $n-1-p < \alpha +n-1-p < n-p$ if $\alpha \in ]0,1[$, the result now follows from \cite[Corollary 4.3]{D12}.
\end{proof}

 \noindent It should be emphasized that under the assumptions of Theorem \ref{theo:ThetaEgalDelta} the Newton polytope $P$ contains the origin as an interior point (this is the convenience assumption).

\begin{corollary}\label{coro:LegalL} 
Let $f$ be a convenient and nondegenerate Laurent polynomial and assume that its Newton polytope $P$ is simplicial.
Then, the Hodge-Ehrhart polynomial $L_{\psi}$ of the function $f$ is equal to the 
Ehrhart polynomial $L_{P}$ of the polytope $P$.\qed
\end{corollary}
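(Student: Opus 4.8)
The plan is to deduce the corollary directly from Theorem \ref{theo:Descrfpq} together with the definition of the Hodge-Ehrhart polynomial and the classical Ehrhart series. The two generating functions in question have the \emph{same denominator} $(1-z)^{n+1}$, so it suffices to compare numerators and then read off the coefficient sequences term by term.

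First I would recall that, by Definition \ref{def:HEPol} and the paragraph preceding it, the Hodge-Ehrhart series of $f$ is the generating function $\Ehr_{\psi}(z)=\sum_{m\geq 0} L_{\psi}(m) z^m$, and summing the defining formulas for the $L_{\psi}^{\alpha}$ over $\alpha\in[0,1[$ gives
\begin{equation}\nonumber
\Ehr_{\psi}(z)=\frac{\theta(z)}{(1-z)^{n+1}}.
\end{equation}
On the Ehrhart side, the series of the polytope $P$ is
\begin{equation}\nonumber
\Ehr_P(z)=\frac{\delta_P(z)}{(1-z)^{n+1}}.
\end{equation}
Now I would invoke Theorem \ref{theo:Descrfpq}, whose hypotheses ($f$ convenient and nondegenerate, $P$ simplicial) are exactly those of the corollary: it gives $\theta(z)=\delta_P(z)$. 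Substituting this equality into the two displays above yields $\Ehr_{\psi}(z)=\Ehr_P(z)$ as formal power series.

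Finally, since the two generating functions coincide, their coefficient sequences agree: $L_{\psi}(m)=L_P(m)$ for every nonnegative integer $m$. Both $L_{\psi}$ and $L_P$ are polynomials in their argument (of degree $n$) — for $L_{\psi}$ this is the content of Lemma \ref{lemma:EhrphiPol} applied to each summand, and for $L_P$ it is the classical Ehrhart polynomiality recalled at the start of this subsection. Two polynomials agreeing on all nonnegative integers are identical, so $L_{\psi}=L_P$ as elements of $\cit[X]$, which is the assertion.

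I do not expect any genuine obstacle here: the corollary is essentially a restatement of Theorem \ref{theo:Descrfpq} passed through the (identical) rational-function machinery defining the two Ehrhart-type series. The only point requiring a word of care is that equality of formal power series gives equality of the associated \emph{functions} on $\nit$, and one then upgrades this to equality of \emph{polynomials} using that both sides are polynomial of bounded degree; this is why the statement can legitimately be phrased as an equality in $\cit[X]$ rather than merely of functions. This is precisely why the proof is short enough to be omitted with a \qed, as the authors do.
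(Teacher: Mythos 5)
Your proposal is correct and follows exactly the argument the paper intends: the paper omits the proof (marking the corollary with an immediate \(\Box\)) precisely because, as you show, it is the direct consequence of Theorem \ref{theo:Descrfpq} combined with the fact that both generating series have the same denominator \((1-z)^{n+1}\), so equal numerators force equal coefficient sequences and hence equal polynomials. Your extra remark on upgrading equality of functions on \(\nit\) to equality in \(\cit[X]\) via bounded degree is a careful touch consistent with the paper's framework, not a deviation from it.
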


\begin{remark} \label{rem:HegalH0}
The equivalent conditions
of Proposition \ref{prop:HegalH0pourf} hold for $f$ if and only if its Newton polytope $P$ is a reflexive lattice polytope (see \cite[Proposition 5.1]{D12}). 
\end{remark}

\subsection{Relations with prior results in combinatorics}

In this section, $P$ is a simplicial polytope containing the origin as an interior point, $f_P$ is a convenient and nondegenerate Laurent polynomial whose Newton polytope is $P$ (by \cite[Theorem 6.1]{K}, $f_P$ exists). By Section \ref{sec:HodgeEhrhartNewtPol}, we can apply the results known for the mixed Hodge structure $MHS_{f_P}$ in order to get informations about $P$, and vice versa. Here is a (non-exhaustive) overview.

\subsubsection{From singularity theory to combinatorics}
\label{subsec:EhrRecipFunctionPolytope}

\noindent {\em Special values.} Some coordinates of the $\delta$-vector $(\delta_0 ,\ldots , \delta_n )$ have an easy combinatorial description. 
The first point is that $\delta_0 =1$ which amounts to $\theta_0 =\dim\gr_F^n H =1$ by Theorem \ref{theo:ThetaEgalDelta}. This is precisely what gives Remark \ref{rem:theta0}. Also, $\theta_0 +\theta_1 +\ldots +\theta_n =\dim H=\mu$
where $\mu$ is the global Milnor number of $f$. By \cite{K}, $\mu =n! \vol(P)$ where the volume is normalized such that the volume of the cube is equal to one and this agrees with the classical formula for
$\delta_0 +\delta_1 +\ldots +\delta_n$. We refer to \cite[Proposition 2.6]{D12} for a discussion about the equalities 
$\theta_n =\delta_n =\Card ( (P -\partial P)\cap \zit^n )$
and $\theta_1 =\delta_1 =\Card (P\cap\zit^n )-n-1$.\\

\noindent {\em Hodge symmetry and Hibi's palindromic theorem.} By Theorem \ref{theo:ThetaEgalDelta} and Corollary \ref{coro:SymDeltaH0}, we have $\delta_{n-p} =\delta_{p}$ for all $p$
if and only if $H=H_0$, that is if and only if $P$ is a reflexive polytope by Remark \ref{rem:HegalH0}. This agrees with Hibi's palindromic theorem \cite{Hibi}.\\

\noindent {\em Hodge-Ehrhart reciprocity and Ehrhart reciprocity.}
We show now that Ehrhart reciprocity for $P$ can be deduced from  the reciprocity law  given by Theorem \ref{theo:EhrReciprocity} for $f_P$.
By Remark \ref{rem:SpectreSym} and \cite[Theorem 4.1]{D12}, we have
\begin{equation}\nonumber
\theta^0 (z) =(1-z)^{n+1} \sum_{m\geq 0} L_{P}^0 (m) z^m
\end{equation}
where $L_{P}^0 (0)=1$ and $L_{P}^0 (m)$ is equal to the number of lattice points on $\partial P \cup  \partial (2 P)\cup \ldots \cup \partial (m P)$ plus one for $m\geq 1$ ($\partial Q$ denotes the boundary of $Q$). In particular,
$L_{\psi}^0 (m) =L_{P}^0 (m)$    
for $m\geq 0$ and Theorem \ref{theo:EhrReciprocity}, together with Corollary \ref{coro:LegalL}, provides 
\begin{equation}\nonumber
L_{P}(-m)=(-1)^n ( L_{P} (m) +L_{P}^0 (m-1)-L_{P}^0 (m))
\end{equation}
for $m\geq 1$. Because $L_{P}^0 (m)-L_{P}^0 (m-1)$ is equal to the number of lattice points on $\partial (mP)$, we finally get, for $m\geq 1$,
\begin{equation}\nonumber
L_{P}(-m)=(-1)^n L_{P^{\circ}} (m) 
\end{equation}
where $P^{\circ}$ denotes the interior of $P$. This is the classical Ehrhart reciprocity.

More generally, and via Theorem \ref{theo:ThetaEgalDelta},
the polynomials $L_{\psi}^{\alpha}$ correspond (up to a shift) to the weighted Ehrhart polynomials $f_k^{0}$
defined in \cite{Stapledon}. In this setting, our Theorem \ref{theo:EhrReciprocity} corresponds to the ``Weighted Ehrhart
Reciprocity'' given by Theorem 3.7 of {\em loc. cit.}\\

\noindent {\em Coefficients and roots.} The results in Section \ref{sec:CoeffRoots}, when
applied to Laurent polynomials, agree with well-known facts in combinatorics (the basics can be found in \cite{BeckRobbins}, see also \cite{BDDPS}, \cite{BHW}...). In this setting, condition (CL) for Ehrhart polynomials of reflexive polytopes is analyzed in details in \cite{HHK}, where a complete list of references about the subject is also given.\\

\noindent {\em Free sums of polytopes.}  
If $P$ is a reflexive polytope and $Q$ is a polytope containing the origin as an interior point, it is shown in \cite{Braun} that
$$\Ehr_{P\oplus Q} (z) =(1-z) \Ehr_{P}(z)\Ehr_{Q} (z)$$
where $P\oplus Q$ denotes the free sum of the polytopes $P$ and $Q$.
This is what gives Theorem \ref{theo:ThomSeb} when applied to convenient and nondegenerate Laurent polynomials with simplicial Newton polytopes $P$ and $Q$.\\

\noindent {\em Decomposition of the $\delta$-vector.}
According to \cite[Theorem 5]{BM}, the $\delta$-vector of a lattice polytope containing the origin as an interior point has a decomposition $\delta (z)= A(z)+zB(z)$ where 
$\deg A(z)=n$, $\deg B(z)\leq n-1$, $A(z)=z^n A(z^{-1})$
and $B(z)=z^{n-1} B(z^{-1})$. This is precisely what gives 
Proposition \ref{prop:DecompTheta} (because $\theta^{\neq 0} (0)=0$, we may write 
$\theta^{\neq 0} (z)= z \overline{\theta} (z)$ where $\overline{\theta} (z)= z^{n-1}\overline{\theta}(z^{-1})$).

\subsubsection{From combinatorics to singularity theory: the Lower Bound Theorem}

Our last point is the counterpart of a result in combinatorics proven by Hibi in \cite{Hibi1} and known as the ``Lower Bound Theorem'':

\begin{proposition}\label{prop:LowerBoundTheorem}
Let $f$ be a convenient and nondegenerate Laurent polynomial and let $MHS_f =(H, F^{\bullet}, W_{\bullet})$ be its Sabbah's mixed Hodge structure. Assume that its Newton polytope $P$ is simplicial. Then,
\begin{equation} \nonumber 
1\leq \dim\gr_F^{n-1}H\leq \dim\gr_F^{n-i}H
\end{equation} 
for $1\leq i\leq n-1$.
\end{proposition}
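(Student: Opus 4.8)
The plan is to transport Hibi's Lower Bound Theorem \cite{Hibi1} into Hodge theory through the dictionary between the $\theta$-vector of $f$ and the $\delta$-vector of its (simplicial) Newton polytope $P$. The first step is to rephrase the assertion entirely in terms of the $\theta$-vector: since $\theta_i=\dim\gr_F^{n-i}H$ by definition, the statement to be proved is exactly $1\leq\theta_1$ together with $\theta_1\leq\theta_i$ for $1\leq i\leq n-1$. Because $P$ is simplicial, Theorem \ref{theo:Descrfpq} identifies $\theta_i=\delta_i$ for all $i$, so it suffices to establish $1\leq\delta_1\leq\delta_i$ for $1\leq i\leq n-1$, purely combinatorial inequalities for $P$.

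For the upper bound I would verify the hypothesis of Hibi's theorem and then invoke it. Since $f$ is convenient, the origin lies in the interior of $P$, so $P$ contains an interior lattice point; equivalently $\delta_n=\Card\!\big((P\setminus\partial P)\cap\zit^n\big)\geq 1$. This is precisely the standing assumption of the Lower Bound Theorem, which then delivers $\delta_1\leq\delta_i$ for $1\leq i\leq n-1$, i.e. $\theta_1\leq\theta_i$ and hence $\dim\gr_F^{n-1}H\leq\dim\gr_F^{n-i}H$ in that range.

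It remains to treat the lower bound $\delta_1\geq 1$ on its own, as Hibi's theorem only compares the $\delta_i$ and does not by itself give positivity. Here I would use the description $\theta_1=\delta_1=\Card(P\cap\zit^n)-n-1$ recalled in the special-values discussion above (from \cite{D12}): as $P$ is $n$-dimensional it has at least $n+1$ vertices, all lying on $\partial P$ and therefore distinct from the interior lattice point $0$, so $\Card(P\cap\zit^n)\geq n+2$ and thus $\delta_1\geq 1$, which is the asserted inequality $1\leq\dim\gr_F^{n-1}H$.

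The genuine content is imported rather than produced: the two real inputs are Theorem \ref{theo:Descrfpq}, where the simplicial hypothesis and the work of \cite{D12} enter to match $\theta_i$ with $\delta_i$, and Hibi's combinatorial theorem. Consequently I expect no analytic difficulty, and the step requiring the most care is the bookkeeping around the hypotheses—checking that ``convenient'' truly forces an interior lattice point so that $\delta_n\geq 1$, and that the index shift $\theta_i=\dim\gr_F^{n-i}H$ lines up with the range $1\leq i\leq n-1$ in Hibi's statement. The one ingredient not supplied by Hibi, and hence the only place an independent argument is needed, is the positivity $\delta_1\geq 1$, which is handled by the elementary lattice-point count above.
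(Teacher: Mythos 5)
Your proposal is correct and takes essentially the same route as the paper: both pass through Theorem \ref{theo:Descrfpq} to reduce the claim to $1\leq\delta_1\leq\delta_i$ for the Newton polytope, check Hibi's hypothesis via $\delta_n=\Card\left((P\setminus\partial P)\cap\zit^n\right)\geq 1$ (the origin being interior since $f$ is convenient), and get positivity from $\delta_1=\Card(P\cap\zit^n)-n-1$. Your explicit vertex count showing $\Card(P\cap\zit^n)\geq n+2$ merely spells out what the paper leaves implicit in the phrase ``because $P$ contains the origin as an interior point.''
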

\begin{proof}
Let $P$ be the Newton polytope of $f$ and let 
$\delta_P (z)=\delta_0 +\delta_1 z +\ldots +\delta_n z^n$ be its $\delta$-vector.
First, $\dim \gr_F^{n-1}H =\delta_1 =\Card (P\cap\zit^n )-n-1 \geq 1$ because $P$ contains the origin as an interior point. Now, we have $\delta_n =\Card ( (P -\partial P)\cap \zit^n )\geq 1$ (again because $P$ contains the origin as an interior point) and therefore, by \cite{Hibi1}, we get
$\delta_1\leq \delta_i \ \mbox{for}\ 1\leq i\leq n-1$. 
Now, the result follows from Theorem \ref{theo:ThetaEgalDelta} 
\end{proof}

\section{Linear inequalities among the Hodge numbers of Sabbah's mixed Hodge structures}

\label{sec:LinearIn}

Let $f$ be a tame regular function on a smooth complex affine variety of dimension $n$. Recall the decomposition of its $\theta$-vector
$\theta (z) =\theta^0 (z) +\theta^{\neq 0} (z)$
given by Proposition \ref{prop:DecompTheta}.
We write $\theta^0 (z)=\sum_{i=0}^n \theta_i^0 z^i$ and $\theta^{\neq 0} (z)=\sum_{i=0}^n \theta_i^{\neq 0} z^i$. After Proposition \ref{prop:LowerBoundTheorem}, we discuss in this section some linear inequalities involving the coefficients $\theta_i$, $\theta_i^{0}$ and $\theta_i^{\neq 0}$. We will make a repeated use of the symmetries
\begin{equation}\label{eq:Sym2}
\theta_{p}^{0}=\theta_{n-p}^{0}
\end{equation}
and
\begin{equation}\label{eq:Sym3}
\theta_{p}^{\neq 0}=\theta_{n-p+1}^{\neq 0}
\end{equation}
for $p=0,\ldots ,n$ (see Section \ref{sec:EhrSHM}).

\subsection{The $H$-conjecture}

\label{sec:ConjH}

We begin with the simplest linear inequality among the coefficients of the $\theta$-vector:

\begin{lemma}\label{lemma:ThetanTheta0}
We have $\theta_n\geq \theta_0$.
\end{lemma}
\begin{proof}
Indeed, we have $\theta_n =\theta^0_{n} +\theta^{\neq 0}_{n} \geq \theta^0_{n} =\theta^0_{0} =\theta_{0}$, where the penultimate equality follows from (\ref{eq:Sym2}) and the last equality follows from Lemma \ref{lemma:Ffinite}.
\end{proof}

\noindent  Despite its simplicity, this first linear inequality is already significant, as shown by the next result: 

\begin{proposition}\label{prop:DeltaThetaIFF}
The $\delta$-vector of a simplicial polytope $P$ is a $\theta$-vector if and only if 
$P$ contains the origin as an interior point.
\end{proposition}
\begin{proof} By Theorem \ref{theo:ThetaEgalDelta}, the $\delta$-vector of a $n$-dimensional simplicial lattice polytope containing the origin as an interior point is always a $\theta$-vector. It follows from 
Lemma \ref{lemma:ThetanTheta0}
that the $\delta$-vector of a simplicial lattice polytope $P$ without interior lattice points is never a $\theta$-vector since in this case $\delta_0 =1$ and $\delta_n =0$ ($\delta_n$ is equal to the number of lattice points in the interior of $P$).
\end{proof}

We will be mainly interested in the following other linear inequalities: 
inequality (\ref{eq:H}) is suggested by \cite{Hibi1} and inequalities (\ref{eq:H++}) and (\ref{eq:Douai}) are natural variations of it:

\begin{definition}\label{def:H}
Assume that $n\geq 3$. We will say that 
\begin{enumerate}
\item $f$ satisfies the H-property if 
\begin{equation}\label{eq:H}
\theta_i \geq \theta_1	\geq\theta_n
\end{equation}
 for $i=1,\ldots , n-1$,	
\item $f$ satisfies the $H^{\circ}$-property  if 
\begin{equation}\label{eq:H++}
\theta_{i}^0 \geq \theta_1^0
\end{equation}
for $i=1,\ldots , n-1$,
\item $f$ satisfies the $H^*$-property if 
\begin{equation}\label{eq:Douai}
\theta_i^{\neq 0} \geq \theta_1^{\neq 0}
\end{equation}
for $i=1,\ldots , n-1$.	
\end{enumerate}
\end{definition}

\noindent If $n=2$, only the condition $\theta_1 \geq \theta_2$ is relevant.

\begin{remark} 
\label{rem:Symmetry} 
It follows from the symmetry properties
(\ref{eq:Sym2}) and (\ref{eq:Sym3}) that
(\ref{eq:H++}) is equivalent to 
\begin{equation}\nonumber
\theta_{n-1}+\theta_{n-2}+\ldots +\theta_{n-i} \leq \theta_2 +\theta_3 +\ldots +\theta_{i+1}
\end{equation}
for $i=1,\ldots ,[(n-1)/2]$ 
(this inequality could have been suggested by \cite{Hibi1} and Theorem \ref{theo:ThetaEgalDelta})
and that (\ref{eq:Douai}) is equivalent to 
\begin{equation}\nonumber 
	\theta_{n-1}+\theta_{n-2}+\ldots +\theta_{n-i} \geq \theta_1 +\theta_2 +\ldots +\theta_{i}
\end{equation}
for $i=1,\ldots ,[(n-1)/2]$.	
\end{remark}

\begin{remark}
Other linear inequalities could be suggested by combinatorics:
thanks to Theorem \ref{theo:ThetaEgalDelta}, and
 after Hibi \cite[Theorem A]{Hibi0}, one could also consider 
the inequality 
\begin{equation}\nonumber 
\theta_{0}+\theta_{1}+\cdots +\theta_{i+1} \geq \theta_n +\theta_{n-1} +\cdots\theta_{n-i}\ \mbox{for}\ i=0,\cdots , [(n-1)/2].	
\end{equation}
But it always holds for $\theta$-vectors since it is equivalent to $\theta_{i+1}^{0}\geq 0$. 
 After Stanley \cite[Proposition 4.1]{Stan1}, one could also consider 
the inequality
\begin{equation}\nonumber 
\theta_{0}+\theta_{1}+\cdots +\theta_{i} \leq \theta_n +\theta_{n-1} +\cdots+\theta_{n-i}\ \mbox{for}\ i=0,\cdots ,n
\end{equation}
if $\theta_n \neq 0$. 
But it always holds for $\theta$-vectors since it is equivalent to $\theta_{n-i}^{\neq 0}\geq 0$. 
\end{remark}

In general, we expect that the properties of Definition \ref{def:H} are always satisfied:

\begin{conjecture}\label{conj:H}
($H$-Conjecture) A tame regular function $f$ on a smooth affine complex variety of dimension $n\geq 3$ satisfies the $H$-property, the $H^{\circ}$-property and the $H^*$-property.
\end{conjecture}

\noindent Two main motivations are in order:  the $H$-conjecture predicts informations about the distribution of the spectrum at infinity of the function $f$ since (\ref{eq:H++}) ({\em resp.} (\ref{eq:Douai})) means that  the number of spectral numbers equal to $i$ ({\em resp.} contained in 
	$]i-1,i[$) is greater or equal than the number of spectral numbers equal to $1$ ({\em resp.} contained in $]0,1[$); if true, it would be also useful in order to test if a polynomial is a $\theta$-vector or not (as in Proposition \ref{prop:DeltaThetaIFF}). See also Section \ref{sec:Distrib} below.

\begin{remark}
 Recall the Hodge numbers $h^{p,q}$ of the mixed Hodge structure $MHS_f$, see 
(\ref{eq:Defhpq}). Since $\theta_p =\dim \gr_F^{n-p} H= \sum_{q}h^{n-p,q}$,
the $H$-conjecture predicts linear inequalities between these Hodge numbers: for instance, the inequality $\theta_i \geq \theta_1$ for $i=1,\ldots , n-1$ is equivalent to 
\begin{equation}\nonumber 
\sum_q h^{n-i,q}\geq \sum_q h^{n-1,q}
\end{equation}
for $i=1,\ldots ,n-1$.  One may wonder to what extent these inequalities hold 
for the Hodge numbers of compact K\"ahler manifolds. 
\end{remark}

\subsection{The $H$-conjecture: some positive answers} 
\label{sec:HT}

\noindent {\bf {\em The $H$-conjecture and Laurent polynomials}}.
The first evidence is provided by Laurent polynomials $f$ whose Newton polytopes contain the origin as an interior point:

\begin{proposition}
\label{prop:HibiPolLaur} 
Let $f$ be a convenient and nondegenerate Laurent polynomial defined on $(\cit^*)^n$. Assume that its Newton polytope $P$ is simplicial. Then $f$ satisfies the $H$-property and the $H^{\circ}$-property. 
\end{proposition}
\begin{proof} 
First, we have $\theta_1 =\theta^0_1 +\theta^{\neq 0}_1$ and $\theta_n =\theta^0_n +\theta^{\neq 0}_n =\theta^0_0 +\theta^{\neq 0}_1 =1+ \theta^{\neq 0}_1$ by 
the symmetry properties
(\ref{eq:Sym2}) and (\ref{eq:Sym3}). Since $\theta_1^0 =\Card (\partial P \cap \zit^n )-n$ (see for instance \cite{D12}), we have $\theta_1^0 \geq 1$ (because $P$ has at least $n+1$ vertices) and we finally get $\theta_1 \geq \theta_n$. 
Using moreover Proposition \ref{prop:LowerBoundTheorem}, we see that
$f$ satisfies the $H$-property. It follows from Remark \ref{rem:Symmetry}, \cite[Remark 1.4]{Hibi} and Theorem \ref{theo:ThetaEgalDelta}
that $f$ satisfies also the $H^{\circ}$-property.
\end{proof}

\begin{remark}
 Let $f$ be a tame polynomial defined on $\cit^n$.
The main difference with the case discussed above is that there is a priori no 
link between Hodge theory and Ehrhart theory in this situation. More precisely, the $\theta$-vector of a convenient and nondegenerate polynomial is never equal to the $\delta$-vector of its Newton polytope (the latter is determined by a ''toric''  spectrum, see \cite{D12}) since $\theta_0 =0$ and $\delta_0 =1$. The $H$-conjecture for the $\theta$-vector of a polynomial function
is still an open problem. As an example,
let us consider the polynomial $f$ defined on $\cit^3$ by
$f(u_1 ,u_2 , u_3 )=u_1^3 +u_2^3 +u_3^3 $.
Then,
$\Spec_{f} (z)=z+3z^{4/3}+3z^{5/3}+z^2$ and we get
$$\theta (z)= \theta^0 (z) +\theta^{\neq 0} (z)=(z+z^2) +6z^2=z+7z^2.$$
Therefore, $f$ satisfies the $H$-conjecture. This example is interesting because the $\delta$-vector of the Newton polytope $P$ of $f$ is $\delta_P (z)= 1+16z+10z^2$, which does not satisfy the $H$-property: this is not in contradiction with the $H$-conjecture since, by Lemma \ref{lemma:ThetanTheta0}, $\delta_P (z)$ cannot be the $\theta$-vector of a regular function.
\end{remark}

\noindent {\bf {\em The $H$-conjecture and unimodality}}.
Let $f$ be a regular tame function defined on an affine manifold of dimension $n$ and let $\theta (z)$ be its $\theta$-vector.
Recall that a polynomial $P (z)=a_0 +a_1 z+\ldots +a_n z^n$ is {\em unimodal} if 
$$a_0 \leq a_1 \leq \ldots \leq a_j \geq a_{j+1}\geq \ldots \geq a_n$$
 for some $0\leq j\leq n$.

\begin{proposition}\label{prop:UnimH}
Assume that $\theta^0 (z)$ and $\theta^{\neq 0} (z)$ are unimodal. Then $f$ satisfies the $H$-conjecture.
\end{proposition}
\begin{proof}
It follows from the symmetry property (\ref{eq:Sym2}) that $f$ has the $H^{\circ}$-property and that $\theta_1\geq \theta_n$ if $\theta^0$ is unimodal. By (\ref{eq:Sym3}), $f$ has also the $H^{*}$-property if $\theta^{\neq 0}$ is unimodal. Because $\theta_i =\theta_i^0 + \theta_i^{\neq 0}$, we get $\theta_i \geq \theta_1$ for $i=1,\ldots ,n-1$.
\end{proof}

A lattice polytope is {\em smooth Fano} if it contains the origin as an interior point and if the vertices of each facet form a lattice basis of $\zit^n$.

\begin{corollary}
Assume that the Newton polytope $P$ of the (convenient and nondegenerate) Laurent polynomial $f$ is smooth Fano. Then $f$  satisfies the $H$-conjecture.
\end{corollary}
\begin{proof} We may assume that $f (u) =\sum_{b\in \cal{V} (P)} u^b$ where $\cal{V}(P)$ denotes the set of the vertices of $P$.
If $P$ is smooth Fano, we have $\theta_p =\theta_p^0 =\dim H^{2p} (X_{\Sigma}, \cit)$ where 
$X_{\Sigma}$ denotes the toric variety of the fan $\Sigma$ over the faces of $P$ (see \cite{D12}).  
By the hard Lefschetz theorem for $X_{\Sigma}$, it follows that the $\theta$-vector of $f$ is unimodal and we apply Proposition \ref{prop:UnimH}.
\end{proof}

Recall that the mixed Hodge structure $MHS_f$ is of {\em Hodge-Tate type} if its Hodge numbers satisfy $h^{p,q}=0$ for $p\neq q$ (see Remark \ref{rem:HodgeTate}).

\begin{corollary}
Assume that the mixed Hodge structure $MHS_f$ is of Hodge-Tate type. Then $f$ satisfies the $H$-conjecture.	
\end{corollary}
\begin{proof} 
Since the mixed Hodge structure is of Hodge-Tate type, we have $\gr_F^p  \gr^W_{2q} H_{\alpha} =0$ for $p\neq q$ therefore the isomorphisms (see the proof of Proposition \ref{prop:SymHodgeNumbers})
\begin{equation}\nonumber
N^{2p-(n-1)} : \gr_F^p  \gr^W_{2p} H_{\alpha} \stackrel{\cong}{\longrightarrow}  \gr_F^{n-1-p}  \gr^W_{2(n-1)-2p}H_{\alpha}
\end{equation}
for $\alpha \in ]0,1[$ and $2p\geq n-1$ and 
\begin{equation}\nonumber
N^{2p-n} : \gr_F^p  \gr^W_{2p} H_{0} \stackrel{\cong}{\longrightarrow}  \gr_F^{n-p}  \gr^W_{2n-2p}H_{0}
\end{equation}
for $2p\geq n$. Hence, $h^{p,p}_0\geq h^{p-1,p-1}_0\ \mbox{if}\ 2p\leq n$ and 
$h^{p+1,p+1}_0\leq h^{p,p}_0\ \mbox{if}\ 2p\geq n$. In the same way, $ h^{p,p}_{\neq 0}\geq h^{p-1,p-1}_{\neq 0}$ if $2p\leq n-1$ and 
$h^{p+1,p+1}_{\neq 0}\leq h^{p,p}_{\neq 0}\ \mbox{if}\ 2p\geq n-1$ 
 (the subscripts refer to the decomposition $H=H_0 \oplus H_{\neq 0}$).
It follows that the $\theta$-vectors $\theta^0 (z)$ and $\theta^{\neq 0} (z)$ are unimodal (because $\theta_p =h^{n-p, n-p}$ by the Hodge-Tate assumption) 
and we apply Proposition \ref{prop:UnimH}.	
\end{proof}

\begin{remark} \label{rem:HLConstr} 
If $MHS_f$ is of Hodge-Tate type, the $H$-conjecture therefore follows from the Lefschetz conditions and the symmetry of the Hodge numbers.
If $MHS_f$ is not necessarily of Hodge-Tate type, we have the Lefschetz conditions
$$h^{p,q}_0\geq h^{p-1,q-1}_0\ \mbox{if}\  p+q\leq n,\ h^{p+1,q+1}_0\leq h^{p,q}_0\ \mbox{if}\ p+q\geq n$$
 and
 $$h^{p,q}_{\neq 0}\geq h^{p-1,q-1}_{\neq 0}\ \mbox{if}\  p+q\leq n-1,\ h^{p+1,q+1}_{\neq 0}\leq h^{p,q}_{\neq 0}\ \mbox{if}\ p+q\geq n-1.$$
One may wonder if the $H$-conjecture still follows from these Lefschetz conditions and the symmetry of the Hodge numbers, in other words if the equalities predicted by the $H$-conjecture are in some sense trivial or not (compare with \cite{Si}).
\end{remark}

\noindent {\bf {\em The $H$-conjecture and Thom-Sebastiani sums.}} It turns out that the $H$-conjecture behaves very well under the Thom-Sebastiani sums defined in Section \ref{sec:ThomSeb}: 

\begin{proposition} 
Let $f'$ be a reflexive function having the $H$-property.
\begin{enumerate}
\item If $f''$ has the $H$-property and the $H^{\circ}$-property then the Thom-Sebastiani sum $f=f'\oplus f''$ has the $H$-property and the $H^{\circ}$-property. 
\item 	If $f''$ has the $H$-property and the $H^*$-property then the Thom-Sebastiani sum $f=f'\oplus f''$ has the $H$-property and the $H^*$-property. 
\end{enumerate}
\end{proposition}
\begin{proof} 
Assume that $f'$ is reflexive and that $f'$ and $f''$ have the $H$-property.  
By (\ref{eq:TS0}), we have
$\theta_i =\sum_{p+q=i} \theta'_p \theta''_q $.
Let $i= n_1 +n_2 -j$ where $j\geq1$ (recall that $n_1 ,n_2\geq 3$). Then, the sum on the right contains 
$\theta'_{n_1-k} \theta''_{n_2 -\ell }+ \theta'_{n_1 -p} \theta''_{n_2 -q}$ where $k+\ell =p+q=j$, $(k,\ell)\neq (p,q)$ and $1\leq k\leq n_1 -1$, $0\leq \ell\leq n_2$, $0\leq p\leq n_1$ and $1\leq q\leq n_2 -1$.
Since $f'$ and $f''$ have the $H$-property, we have $\theta'_{n_1-k}\geq \theta'_1$ (resp. $\theta''_{n_2-q}\geq \theta''_1$). 
Moreover, by Lemma \ref{lemma:ThetanTheta0}, $\theta''_{n_2 -\ell}\geq 
\theta''_{0}$ (resp. $\theta'_{n_1 -p}\geq \theta'_{0}$).  
Therefore,
$$\theta_i =\sum_{p+q=i} \theta'_p \theta''_q \geq \theta'_1 \theta''_0 +\theta'_0 \theta''_1 =\theta_1 $$
for $i=1, \ldots ,n-1$.
In order to show that $f$ has the $H$-property, it remains to show that $\theta_1 \geq \theta_n$ (equivalently, $\theta_1^0 \geq \theta_0^0$). 
Using (\ref{eq:TS1}), we get
$$\theta_1^0 = \theta'_1 (\theta'')^0_0 +\theta'_0 (\theta'')^0_1 \geq \theta'_0 (\theta'')^0_0 = \theta_0^0$$
because $\theta'_1\geq \theta'_{n_1}\geq \theta'_0$, and this gives the required inequality.
In the same way, if $f''$ has the $H^{\circ}$-property we get, using again (\ref{eq:TS1}),
$$\theta_i^0 =\sum_{p+q=i} \theta'_p (\theta'')^0_q \geq \theta'_1 (\theta'')^0_0 +\theta'_0 (\theta'')^0_1 =\theta_1^0 $$
for $i=1, \ldots ,n-1$.
Thus, $f$ has the $H^{\circ}$-property and the first point follows. The second one is shown similarly, taking into account (\ref{eq:TS2}).
\end{proof}

\begin{corollary}
Assume that $f'$ is reflexive and that it satisfies the $H$-conjecture. Assume moreover that $f''$ satisfies the $H$-conjecture. Then the Thom-Sebastiani sum $f'\oplus f''$ satisfies the $H$-conjecture.\qed
\end{corollary}

\subsection{Application to the $\theta$-vector of a Laurent polynomial}

\label{sec:Distrib}

Last, we give some consequences for the $\theta$-vector of functions $f$ satisfying the $H$-property and the $H^{\circ}$-property (by Proposition
\ref{prop:HibiPolLaur}, this is the case if $f$ is a convenient and nondegenerate Laurent polynomial).

\begin{lemma}
Assume that $f$ satisfies the $H$-property and the $H^{\circ}$-property. Then, if $n\geq3$, we have
\begin{enumerate}
\item $\theta_i \geq \theta_1 \geq \theta_n \geq \theta_0$ for $i=1,\ldots , n-1$,	
\item $\theta_2^{\neq 0} =\theta_{n-1}^{\neq 0} \geq \theta_1^{\neq 0}$,
\item $\theta_2 \geq \theta_{n-1}$, 
\item $\theta^0_{i} \geq \theta_1^0$ for $i=1,\ldots , n-1$.
\end{enumerate} 
\end{lemma}
\begin{proof} 
The first point follows from the definition of the $H$-property and Lemma \ref{lemma:ThetanTheta0}.
The second point follows from (\ref{eq:H}) with $i=n-1$, together with Proposition 
\ref{prop:DecompTheta}. The third point follows from (\ref{eq:H++}) with $i=2$ and the last one is the definition of the $H^{\circ}$-property.
\end{proof}

\noindent Therefore, we get $\theta_2 \geq \theta_1 \geq \theta_3\geq \theta_0$ if $n=3$ and 
$\theta_2 \geq \theta_3\geq \theta_1 \geq \theta_4\geq \theta_0$ if $n=4$.

\begin{proposition}
Assume that $f$ satisfies the $H$-property and the $H^{\circ}$-property. Then,
\begin{enumerate}
\item  $\theta (z)$ is unimodal if $n\leq 4$, 
\item $\theta^0 (z)$ is unimodal if $n\leq 5$,
\item  $\theta^{\neq 0} (z)$ is unimodal if $n\leq 4$,
\item $f$ has also the $H^*$-property if $n\leq 4$.
\end{enumerate}
\end{proposition}
\begin{proof}
This is a direct consequence of the previous lemma, using the symmetry properties 
(\ref{eq:Sym2}) and (\ref{eq:Sym3}).
\end{proof}

\noindent If $n\geq 5$, we cannot expect something better than 
$$\theta_{2}\geq \theta_{n-1}\ \mbox{and}\ 
\theta_{i}\geq \theta_1 \geq \theta_n\geq \theta_0\ \mbox{for}\ i=1,\ldots, n-1.$$ 
For instance, if $n=5$, the position of $\theta_3$ with respect to $\theta_2$ and $\theta_4$ may vary: indeed, let us consider the Laurent polynomial $f$ 
defined by
$$f(u_1 ,\ldots ,u_5 )=u_1+u_2 +u_3 +u_4 +u_5 +\frac{1}{u_1 u_2 u_3 u_4 u_5^s}$$
where $s$ is a positive integer. Then $\mu =s+5$.
\begin{itemize}	
\item If $s=2$, we have $2=\theta_3 > \theta_2 =\theta_4 =1$. The $\theta$-vector of $f$ is $\theta (z)= 1+z+z^2+2z^3 +z^4+ z^5$ is unimodal. Note that $\theta (z)=\theta^0 (z)+\theta^{\neq 0} (z)$ where 
$\theta^0 (z)=1+z+z^2 +z^3 +z^4 +z^5$ and $\theta^{\neq 0} (z)=z^3$.
\item If $s=3$, we have $2=\theta_4 = \theta_2 >\theta_3 =1$. The $\theta$-vector of $f$ is $\theta (z)= 1+z+2z^2+z^3 +2z^4+ z^5$ is not unimodal. Moreover, $\theta (z)=\theta^0 (z)+\theta^{\neq 0} (z)$ where 
$\theta^0 (z)=1+z+z^2 +z^3 +z^4 +z^5$ and $\theta^{\neq 0} (z)=z^2 +z^4$.
\end{itemize} 
This example also shows that the unimodality of $\theta^{\neq 0} (z)$ and $\theta (z)$ is
no longer true if $n\geq 5$.

\end{document}